\newtheorem{theorem}{Theorem}[section]
\newtheorem*{theorem*}{Theorem}
\newtheorem{lemma}[theorem]{Lemma}
\theoremstyle{definition}
\newtheorem{definition}{Definition}[section]
\newtheorem*{remark}{Remark}
\numberwithin{equation}{section}
\begin{document}

\title{The ergodic Mean Field Game system for a type of state constraint condition}
\author{Mariya Sardarli}
\date{}
\maketitle
\section{Introduction}
\noindent The object of this paper is to establish well-posedness (existence and uniqueness) results for a type of state constraint ergodic Mean Field Game (MFG) system  
\begin{align}\label{eq:proto}
\begin{cases}
-\Delta u+H(Du)+\rho =F(x;m) &\text{ in } \Omega,\\
\Delta m+div(m D_p H(Du))=0 &\text{ in } \Omega,\\
m\geq 0, \quad \int_\Omega m=1,
\end{cases}
\end{align}
supplemented with the state constraint-type infinite Dirichlet boundary condition 
\begin{equation}\label{intro:bds}
\lim\limits_{d(x)\to 0}u(x)=\infty,
\end{equation}
where $d(x):=d(x, \partial \Omega)$ is the distance to the boundary. Here $\Omega\subset \mathbb{R}^n$ is an open bounded subset of $\mathbb{R}^n$, $H: \mathbb{R}^n \times \overline{\Omega} \to \mathbb{R}$ is the Hamiltonian  associated with the cost function of an individual agent, $\rho$ is the ergodic constant, $m$ is the distribution of the agents, and $F: \Omega \times L^1(\Omega) \to \mathbb{R}$ is the interaction term. The Hamilton-Jacobi-Bellman (HJB) equation is understood in the viscosity sense and the Kolmogorov-Fokker-Planck (KFP) equation, in the distributional sense. 
\\\\
The MFG $(\ref{eq:proto})$ with the boundary condition $(\ref{intro:bds})$ can be interpreted as a state constraint-type problem. Indeed, the infinite boundary condition prevents the underlying stochastic trajectories from reaching the boundary, forcing them to stay in the domain. 
\\\\
The theory of MFGs was introduced by Lasry and Lions \cite{p1} and, in a particular setting, by Caines, Huang, and Malham\'e \cite{c1}, to describe the interactions of a large number of small and indistinguishable rational agents. In the absence of common noise, the behavior of the agents leads to a forward-backward coupled system of PDEs, consisting of a backward HJB equation describing the individual agent's value function, and a forward KFP equation for the distribution of the law (density) of the population. 
\\\\
The forward-backward system with periodic boundary conditions and either local or non-local coupling, as well its ergodic stationary counterpart, was first studied in \cite{p1}. Summaries of results may also be found in Cardaliguet \cite{notes}, Lions \cite{france}, and Gomes, Pimentel, and Voskanyan \cite{gom}. Fewer results exist, however, in the case of either Dirichlet, state constraint or Neumann boundary conditions.  
\\\\
We next state the main result of this paper for superlinear power-like Hamiltonians, that is, 
\begin{equation}
H(p,x)=|p|^q \text{ with } \quad q>1, \label{ham}
\end{equation}
and suitable assumptions on the coupling $F$ (see $(F1)$, $(F2)$ and $(F3)$ in Section 2.2). We emphasize that the particular form of the Hamiltonian is by no means essential to the analysis that follows. It simply provides a sufficiently general model problem that results in the necessary asymptotics of $u$ and $m$ near $\partial \Omega$. In particular, for $1<q\leq 2$, Hamiltonians $H(p)$ for which
\begin{equation}\label{eq:gen}
\delta^{q/(q-1)}H(\delta^{-1/(q-1)}p)- |p|^q \to 0,
\end{equation} 
locally uniformly in $p$ as $\delta \to 0$ are permissible. We state the result below, deferring the precise meaning of a solution to the system until Section 3.  
\begin{theorem*}\label{mainintro}
Let $1<q\leq 2$ and assume either $(F1)$ or $(F2)$. \\
Then, for all $r>1$, there exists a solution $(u,\rho, m)\in W^{2,r}_{loc}(\Omega) \times \mathbb{R} \times W^{1,r}_{loc}(\Omega)$ to the system  
\begin{align}\label{eq:mainintro}
\begin{cases}
-\Delta u+|Du|^q+\rho =F(x;m) &\text{ in } \Omega,\\
\lim\limits_{d(x)\to 0} u(x)=\infty, \\
\Delta m+div(m q|Du|^{q-2}Du)=0 &\text{ in } \Omega,\\
m \geq 0, \quad \int_\Omega m=1. 
\end{cases}
\end{align}
Moreover, if $F$ satisfies $(F3)$, then the solution is unique. 
\end{theorem*}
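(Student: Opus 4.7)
The plan is to obtain existence via a Schauder fixed-point argument. For each density $\mu$ in an appropriate convex subset $\mathcal{K}$ of probability densities on $\Omega$, I would first solve the ergodic HJB problem
\[
-\Delta u + |Du|^q + \rho = F(x;\mu)\text{ in }\Omega,\quad \lim_{d(x)\to 0} u(x) = \infty,
\]
obtaining an ergodic pair $(u_\mu,\rho_\mu)$ with $u_\mu$ unique up to an additive constant (normalized, say, by $u_\mu(x_0)=0$ at a fixed interior point). Using $u_\mu$ as the drift potential, I would then solve the linear KFP problem
\[
\Delta m + \operatorname{div}\bigl(m\,q|Du_\mu|^{q-2}Du_\mu\bigr) = 0\text{ in }\Omega,\quad m\geq 0,\ \int_\Omega m=1,
\]
and set $\Phi(\mu):=m$. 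A fixed point of $\Phi$ is a solution of the MFG system.

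\textbf{HJB subproblem.} I would construct $u_\mu$ by an approximation procedure — solving finite Dirichlet problems with boundary data $u=n$ and passing $n\to\infty$, extracting $\rho$ from the normalizing constant. The crucial input is the explicit boundary profile $C_q\,d(x)^{-(2-q)/(q-1)}$ of the pure power equation (with a logarithmic correction at $q=2$); assumption (\ref{eq:gen}) is exactly what is needed for this profile to yield matching sub- and super-solutions of the perturbed problem near $\partial\Omega$. This gives both a sharp boundary asymptotic of $u_\mu$ and interior $W^{2,r}_{loc}$ estimates depending only on bounds for $F(\cdot;\mu)$.

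\textbf{KFP subproblem and fixed point.} The drift blows up at $\partial\Omega$, but its direction is outward-pointing, which forces $m$ to decay rapidly there. Existence can be obtained by truncating the drift, solving the regularized linear elliptic problem for a normalized positive invariant density, and passing to the limit via interior De Giorgi--Nash--Moser estimates. Testing against $u_\mu$ yields the energy identity $\int_\Omega m|Du_\mu|^q = \int_\Omega F(\cdot;\mu)\,m - \rho$, which, combined with the boundary asymptotics of $u_\mu$, controls the tightness of $m$ and yields $W^{1,r}_{loc}$ regularity. Continuity of $\Phi:L^1(\Omega)\to L^1(\Omega)$ follows from stability of each subproblem under $L^1$ convergence of $\mu$, and compactness from the uniform interior regularity together with uniform decay of $m$ near $\partial\Omega$; Schauder then delivers a fixed point.

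\textbf{Uniqueness and main obstacle.} For uniqueness, interpreting (F3) as the Lasry--Lions monotonicity condition $\int_\Omega (F(x;m_1)-F(x;m_2))(m_1-m_2)\,dx \geq 0$ with equality forcing $m_1=m_2$, I use the classical duality argument: subtract the two HJB equations and test against $m_1-m_2$, subtract the two KFP equations and test against $u_1-u_2$, integrate by parts and add. Strict convexity of $p\mapsto|p|^q$ produces a nonnegative term of the form $\int_\Omega(m_1+m_2)\,G(Du_1,Du_2)$ that, combined with monotonicity, forces $Du_1=Du_2$ on $\{m_1+m_2>0\}$ and then $m_1=m_2$, $\rho_1=\rho_2$, $u_1=u_2+\text{const}$. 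I expect the main obstacle throughout to be the boundary analysis: proving that $m_i$ decay fast enough that all boundary contributions in these integrations by parts vanish, and that no mass escapes to $\partial\Omega$ in the limit passages. This is the technical heart both of the Schauder compactness step and of the uniqueness identity.
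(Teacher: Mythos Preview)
Your overall architecture---Schauder/Schaefer fixed point on densities, Lasry--Lions duality for uniqueness, and the recognition that the boundary analysis is the technical heart---matches the paper. For the non-local case (F1) the paper does essentially what you describe, working in the weighted space $\mathcal{W}_\gamma(\Omega)=\{m:\int_\Omega d^{-\gamma}m<\infty\}$ and using a Lyapunov function $V=d^{-C+1+\epsilon}$ (rather than your energy identity) to get the tightness estimate $\int_\Omega d^{-\gamma}m\le C$ uniformly. The uniqueness argument is also the same: a cutoff $\phi=\psi(d/\delta)$ localizes the Lasry--Lions identity, and the weighted integrability of $m$ kills the boundary terms as $\delta\to 0$.

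There is, however, a real gap in your treatment of the local case (F2). You assert that continuity of $\Phi:L^1(\Omega)\to L^1(\Omega)$ ``follows from stability of each subproblem under $L^1$ convergence of $\mu$,'' but for a local coupling $F(x;\mu)=f(\mu(x))$, $L^1$ convergence of $\mu_n$ does \emph{not} yield locally uniform (or even $L^\infty_{loc}$) convergence of $f(\mu_n(\cdot))$, which is what you need to pass to the limit in the HJB equation and thus in $Du_{\mu_n}$. The paper identifies this explicitly as the obstruction and does \emph{not} run the fixed point in $L^1$ (or in $\mathcal{W}_\gamma$) in the local case. Instead it introduces an approximating system: the HJB is kept on all of $\Omega$ with the explosive boundary condition, but the KFP is posed on the subdomain $\Omega_\delta$ with Neumann boundary conditions. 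Because the drift is bounded on $\Omega_\delta$, the KFP solution $m_\delta$ is H\"older up to $\partial\Omega_\delta$, and the fixed point is carried out in $C^{0,\alpha}(\Omega_\delta)$, where $\mu_n\to\mu$ does imply $f(\tilde\mu_n)\to f(\tilde\mu)$ uniformly. One then sends $\delta\to 0$, using a uniform-in-$\delta$ version of the boundary asymptotics of $Du_\delta$ (so that a single Lyapunov function yields tightness of the family $m_\delta$) to recover a solution on $\Omega$. Your proposal does not contain this two-layer structure, and without it the continuity step in (F2) fails.

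A smaller point: your energy identity $\int_\Omega m|Du_\mu|^q=\int_\Omega F\,m-\rho$ is obtained by testing the KFP against $u_\mu$, which blows up at $\partial\Omega$, so the identity itself is exactly the kind of formal manipulation whose justification is the ``main obstacle'' you flag. The paper avoids this circularity by getting the decay of $m$ from the Lyapunov inequality $\Delta V-b\cdot DV\to-\infty$, which only requires the asymptotics of $b=q|Du|^{q-2}Du$, not an a priori energy bound.
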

\noindent As will be explained below, the conditions $(F1)$ and $(F2)$ describe non-local and local couplings respectively, while $(F3)$ is the usual monotonicity condition.

\subsection{Background}
\noindent Heuristically, HJB equations with convex (in the momentum) Hamiltonian $H$ are associated with the stochastic control problem governed by the dynamics  
\begin{equation*}
dX_t=\alpha_t dt+\sqrt{2}dB_t,
\end{equation*}
where $(\alpha_t)_{t\geq0}$ is a non-anticipating process. In feedback form, the optimal policy is $-D_pH(Du)$. 
\\\\
In settings in which the trajectories of the agents reach the boundary but do not leave the domain or, as is the case we consider here, do not reach the boundary at all, the solutions of the HJB equation are referred to as state constraint solutions. 
\\\\
If the drift $-D_pH(Du)$ is bounded, then for all $t>0$, $P(X_t\in \Omega^c)>0$. It follows that for trajectories to remain inside $\Omega$, the drift must become unbounded near the boundary, pushing the agents back into the domain. For Hamiltonians of the form (\ref{ham}), or more generally $(\ref{eq:gen})$, state constraint solutions exist when the equation is paired with an infinite Neumann condition (reflection cost) or infinite Dirichlet condition (exit cost), the ``correct" choice of boundary condition depending on whether the Hamiltonian is sub- or super-quadratic. Indeed, when $q>2$, solutions are bounded but the normal component of the drift blows up at the boundary. In the sub-quadratic case, the solution itself also blows up. 
\\\\
In the sub-quadratic case, the trajectories never reach the boundary. Hence, the population density  is ``almost zero" near $\partial \Omega$. As the behavior of $m$ at the boundary is coupled with the blowup of $Du$ on $\partial \Omega$, no boundary condition is required on $m$ in (\ref{eq:mainintro}).  
\\\\
A type of MFG state-constraint problem has been studied by Porretta and Ricciardi in \cite{por2}. The authors impose structural assumptions on their Hamiltonian so that, for all choices of the control, an agent governed by these dynamics does not exit the domain $\Omega$. In this instance, no explicit boundary conditions on the value function are necessary to achieve well posedness. However, the assumptions of \cite{por2} do not permit coercive Hamiltonians, and, in particular, do not apply to the ones considered in this paper. 

\subsection{Infinite Dirichlet boundary conditions}
In the context of HJB equations, infinite boundary conditions with power-like Hamiltonians were studied by Lasry and Lions in $\cite{pll}$, who considered the HJB equation 
\begin{equation}
-\Delta u+|D u|^q+\lambda u =g \text{ in }\Omega,
\end{equation}
with $1<q\leq 2$ and $g\in L^\infty(\Omega)$, as well as the ergodic problem 
\begin{equation*}
-\Delta u+|D u|^q+\rho =g \text{ in }\Omega,
\end{equation*}
both coupled with the infinite Dirichlet boundary condition $(\ref{intro:bds})$. 
\\\\
It was shown in $\cite{pll}$ that the boundary value problem 
\begin{align}\label{eq:hjb}
\begin{cases}
-\Delta u+|D u|^q+\rho =g \text{ in }\Omega,\\
\lim\limits_{d(x) \to 0}u(x)=\infty,
\end{cases}
\end{align}
has a unique viscosity solution $(u,\rho)$, where uniqueness for $u$ is understood to be modulo an additive constant. Such a solution has unbounded drift in the direction of the boundary and the agents remain in the domain. \\\\
Due to the coupled nature of the MFG system, the existence and well posedness of the corresponding KFP equation depends in an essential way on the asymptotics of $u$ and $Du$ near the boundary. 
\\\\
In general, if $b\in L^1_{loc}(\Omega)$, the distributional solution $m$ of  
\begin{align}\label{eq:mbla}
\Delta m+div(mb)=0 \text{ in }\Omega, \quad \int_\Omega m=1, \quad m\geq 0,
\end{align}
need not be unique unless a boundary condition is specified. 
\\\\
On the other hand, if  
\begin{equation}\label{eq:13}
\displaystyle\lim\limits_{d(x)\to 0} b(x)d(x)=C \nu,
\end{equation}
where $C>0$ and $\nu$ is the outward unit normal to the boundary at the point closest to $x$, Huang, Ji, Liu and Yi showed in \cite{ua} that solutions are unique, despite the absence of boundary conditions. 
\\\\
To show $(\ref{eq:13})$ for $b=D_pH(Du)$ and apply this result, we need to make precise the asymptotics of $u$ and $Du$ near the boundary. 
\\\\
In \cite{pll}, the authors proved that if $(u, \rho)$ is a solution of $(\ref{eq:hjb})$, the asymptotics are precisely of this type. Namely, there exists a constant $C>0$ such that 
\begin{align*}
\lim\limits_{d(x)\to 0}\dfrac{D u(x)\cdot \nu (x)}{d^{-1/(q-1)}}=C.
\end{align*}
In this paper, we refine the result and show that the rate of convergence of the above limit is controlled by the $L^\infty-$norm of $g$. This becomes important when proving stability results related to $(\ref{eq:mainintro})$.

\subsection{The methodology} 
\noindent As usual, the existence of a solution to the MFG system $(\ref{eq:mainintro})$ is proven via a fixed point argument. To apply such an argument, it is crucial to obtain bounds on $m$ up to $\partial \Omega$.  Toward that end, for $\delta>0$, we define the subdomain $\Omega_\delta \subset \Omega$ by 
\begin{equation*}
\Omega_\delta:=\{x \in \Omega: d(x,\partial \Omega)>\delta\}.
\end{equation*}
As a first step, we show that if $b$ satisfies $(\ref{eq:13})$ with constant $C$, then there exists $\gamma=\gamma(C)>0$, $\delta=\delta(C)>0$, and $L=L(C,||b||_{L^\infty(\Omega_\delta)})>0$ such that any solution $m$ of $(\ref{eq:mbla})$ satisfies 
\begin{equation}\label{eq:gam|}
\int_\Omega d^{-\gamma} m\leq L.
\end{equation}
This estimate is needed in both the local and non-local cases.
\\\\
The space in which the fixed point argument is carried out differs in the non-local and local coupling settings. In the former, the fixed point is obtained in the space
\begin{equation}\label{wgamma}
\mathcal{W}_\gamma(\Omega):=\{m\in L^1(\Omega): \int_\Omega d^{-\gamma} m<\infty\},
\end{equation}
where $\gamma$ is as in $(\ref{eq:gam|})$ and, here, depends only on $q$. The map $T:\mathcal{W}_\gamma(\Omega)\to \mathcal{W}_\gamma(\Omega)$ formed by composing the solution maps of $(\ref{eq:hjb})$ and $(\ref{eq:mbla}$) satisfies the conditions of Schaefer's fixed point theorem. In particular, the continuity of $T$ is straightforward due to the regularizing effect of the coupling.
\\\\
By contrast, the continuity of the solution map is far from obvious in the local case, where $F(x;m)=f(m(x))$ for a continuous and bounded function $f$. Convergence of a sequence $m_n$ in $\mathcal{W}_\gamma(\Omega)$ does not yield local uniform convergence of $F(x;m_n)$ as it does in the non-local case. Thus the fixed point argument must be performed in a different space. 
\\\\
To deal with this issue, we introduce, for $\delta>0$, the approximating coupled ergodic system 
\begin{align}\label{eq:deltap}
\begin{cases}
-\Delta u_\delta+|Du_\delta|^q+\rho_\delta =f(\tilde{m}_\delta(x)),  &\text{ in } \Omega,\\
\lim\limits_{d(x)\to 0} u_\delta(x)=\infty,  \\
\Delta m_\delta+div\left(m_\delta q|D u_\delta|^{q-2}D u_\delta\right)=0  &\text{ in } \Omega_\delta,\\
\left(D m_\delta+m_\delta q|D u_\delta|^{q-2}D u_\delta\right)\cdot \nu=0 &\text{ on } \partial\Omega_\delta, \\
\int_\Omega m_\delta=1 ,
\end{cases}
\end{align}
where $(u_\delta, \rho_\delta, m_\delta) \in W^{1,r}\times \mathbb{R}\times C^{0,\alpha}(\Omega_\delta)$ and $\tilde{m}_\delta$ is a suitable extension of ${m_\delta}$ to all of $\Omega$ satisfying 
\begin{equation*}
||\tilde{m}_\delta||_{C^{0,\alpha}(\Omega)}=||m_\delta||_{C^{0,\alpha}(\Omega_\delta)}.
\end{equation*}
We note that (\ref{eq:deltap}) is not the standard MFG system with Neumann boundary conditions, in which the HJB and KFP equations are set in the same domain and both have Neumann boundary conditions. Instead, the HJB equation is set in the entire domain $\Omega$, allowing us to to take advantage of the known asymptotics of $u$ and $Du$ near the boundary. On the other hand, the Neumann conditions on the KFP equation allow us to exploit in regularity of $m_\delta$ up to $\partial \Omega_\delta$.\\\\
As we discuss below, for $b \in L^{\infty}(\Omega)$, the KFP equation 
\begin{align*}
\begin{cases}
\Delta m_\delta+div(m_\delta b)=0  &\text{ in } \Omega_\delta,\\
(D m_\delta+m_\delta b)\cdot \nu=0 &\text{ on } \partial\Omega_\delta,
\end{cases}
\end{align*}
has a a unique (modulo a multiplicative constant) distributional solution, which is positive and H\"older continuous up to the boundary with H\"older constant depending on the $L^\infty-$norm of the drift. If $u$ is the solution of a HJB equation like $(\ref{eq:hjb})$, $Du$ is bounded on $\Omega_\delta$ and this result applies to the KFP equation in $(\ref{eq:deltap})$. Moreover, as shown in \cite{pll}, the local bounds on the solutions $u_\delta$ of the HJB equation are uniform in the $L^\infty-$norm of $f$, which is independent of $m_\delta$. Therefore, as we show below, it is possible to carry out a fixed point argument in $C^{0, \alpha}(\Omega_\delta)$. 
\\\\
We are then able to pass from the solution $(u_\delta, \rho_\delta, m_\delta)$ of the approximating system to a solution $(u,\rho, m)$ of the original system on $\Omega$ by the stability of the HJB equation and the local uniform bounds on $m_\delta$, which in turn follow from local uniform bounds on $Du_\delta$.  
\\\\
Lastly, under the usual monotonicity assumption we establish uniqueness in both the local and non-local case. The key observation in the argument is that, for solutions $(\rho, u, m)$ of (\ref{eq:mainintro}), $m$ decays near the boundary as in (\ref{eq:gam|}).

\subsection{Organization of the paper}

\noindent The paper is organized as follows. The assumptions on the domain $\Omega$ and the coupling $F$ are stated in Section 2. Section 3 introduces the notions of weak solutions of the KFP and HJB equations and proves (local) regularity and stability results. Section 4 treats the non-local coupling case. In Section 5, we show the approximating problem (\ref{eq:deltap}) has a solution and pass to the limit to obtain a solution to $(\ref{eq:mainintro})$ on the entire domain. Section 6 establishes the uniqueness of the system and quantifies the sense in which $m$ vanishes at the boundary. The Appendix contains two technical lemmata used in Sections 3 and 5. 

\section{Assumptions and definitions}
\subsection{The domain}
Throughout this paper it is assumed that 
\begin{equation*}
\Omega \text{ is a bounded open subset of }\mathbb{R}^n\text{ with } C^2 \text{-boundary}.
\end{equation*} 
In particular, the domain satisfies the uniform interior ball condition. Namely, there exists a $\delta_0>0$ such that for every $x\in \partial \Omega$, the ball $B_{\delta_0}(x-\delta_0 \nu(x))$ is contained in $\Omega$. 
\\\\
Recall the definition of the subdomains 
\begin{equation*}
\Omega_\delta:=\{x\in \Omega: d(x,\partial\Omega)>\delta\}.
\end{equation*}
Due to the $C^2$-regularity of $\Omega$, there exists $\epsilon_0 \in (0,1)$ such that $d(x,\partial\Omega) \in C^2(\Omega\backslash \Omega_{\epsilon_0})$ and for all $x\in \Omega\backslash \Omega_{\epsilon_0}$ there exists a unique point $\overline{x}\in \partial \Omega$ such that $d(x, \partial\Omega)=|x-\overline{x}| $. Moreover, on $\Omega\backslash \Omega_{\epsilon_0}$,
\begin{equation*}
Dd(x, \partial \Omega)=Dd(\overline{x}, \partial \Omega)=-\nu (\overline{x}).
\end{equation*}
Lastly, we define $d$ to be a $C^2(\overline{\Omega})$ extension of $d(x,\partial\Omega)$ which is equal to $d(x,\partial\Omega)$ on $\Omega\backslash \Omega_{\epsilon_0}$. We additionally require that $d(x)\leq 1$ in $\Omega$ and $d(x)\geq M_0$ in $\Omega_{\epsilon_0}$ for some positive constant $M_0$.

\subsection{The coupling}
Our analysis permits a coupling $F:\Omega\times L^1(\Omega)\to\mathbb{R}$  which is either non-local, on the one hand, or local, continuous, and bounded on the other. 
Namely, we consider couplings $F$ satisfying one of the following:
\begin{enumerate} [(F1)]
\item The map $m \mapsto F(\cdot\ ; m)$ sends bounded sets in $L^1(\Omega)$ to bounded sets in $L^\infty(\Omega)$, and is continuous from $L^1(\Omega)$ into $L^\infty(\Omega)$.  
\item $F(x; m)=f(m(x))$ for $f$ a continuous, bounded function.
\end{enumerate} 
A coupling satisfying assumption $(F1)$ has an $L^\infty(\Omega)$-bound depending only on $\int_\Omega m$. A coupling satisfying $(F2)$ inherits regularity from the regularity of $m$ and is bounded by $||f||_{L^\infty(\mathbb{R})}$. \\\\
When proving uniqueness we will require the additional monotonicity assumption 
\begin{enumerate}[(F3)]
\item $\int_{\Omega}(F(x;m_1)-F(x;m_2))(m_1(x)-m_2(x))dx\geq 0$, with equality only if $m_1=m_2$ a.e.
\end{enumerate}

\subsection{The space of measures for the non-local setting}
In the introduction, we defined a space $\mathcal{W}_\gamma(\Omega)$ which quantifies the boundary behavior of solutions to KFP equations with an unbounded drift term. We repeat the definition here for completeness.
\begin{definition}
For $\gamma>0$, define a norm $||\cdot||_{\gamma,\Omega}$ by 
\begin{align*}
||m||_{\gamma,\Omega}=\int_\Omega d^{-\gamma}|m|,
\end{align*}
and recall that $\mathcal{W}_\gamma(\Omega)$ is the Banach space induced by this norm, as in $(\ref{wgamma})$.

\end{definition} 

\subsection{The extension of measures defined on subdomains}
In the local coupling setting discussed in Section 5 we will consider measures defined on the subdomain $\Omega_\delta$. Since $F$ is defined on $\Omega \times L^1(\Omega)$, it will be necessary to define a suitable extension of measures on $\Omega_\delta$ to $\Omega$ that preserves their H\"older regularity. 
\\\\
For $\mu\in C^{0,\alpha}(\Omega_\delta)$ and $x\in \Omega$ we define $\tilde{\mu}$ by  
\begin{align}\label{tilde}
\tilde{\mu}(x):=\inf_{y \in \Omega_\delta}\{\mu(y)+||\mu||_{C^{0,\alpha}(\Omega_\delta)}|x-y|^\alpha\}. 
\end{align}
It is straightforward to check that $\tilde{\mu}(x)=\mu(x)$ on $\Omega_\delta$ and that $\tilde{\mu}\in C^{0,\alpha}(\Omega)$ with the same H\"older modulus as $\mu$. 
\\\\
In the sections that follow, we abuse notation slightly and define the coupling $F: \Omega \times C^{0,\alpha}(\Omega_\delta) \to \mathbb{R}$ by 
\begin{align*}
F(x;\mu):=F(x;\tilde{\mu}).
\end{align*}

\section{Preliminaries}
In this section, we recall some existence and regularity results for the HJB equation and the KFP equation. At the end we prove stability results that will be used in Section \ref{s:fixed}.
\subsection{The Kolmogorov Fokker-Planck equation}\label{s:fp} Throughout this paper we will need to consider the KFP equation in subdomains of $\Omega$. To that end, we study  the following KFP boundary value problem for a general open bounded domain $V$ and $B\in L^\infty(V)$. 


\begin{eqnarray}
\begin{cases}\label{mdelta}
\Delta \mu +div(\mu B)=0 \ &\text{ in } V,\\
(D\mu +\mu B)\cdot \nu=0 &\text{ on }\partial V,\\
\int_{V} \mu=1,\, \mu>0 \, \text{ in }V.
\end{cases}
\end{eqnarray}
Later we will apply these results when $V=\Omega_\delta$ and $B=D_p(H(Du))$. 
\begin{definition}
Given $B\in L^{\infty}(V)$, we say that $\mu\in W^{1,2}(V)$ is a {Neumann weak solution} of $(\ref{mdelta})$ if $\mu$ is a positive probability measure such that, for all $\phi \in W^{1,2}(V)$, 
\begin{align*}
\int_{V}[D\phi(x)\cdot D\mu(x) +B(x)\cdot D \phi(x)\mu(x)]\ dx=0.
\end{align*}

\end{definition}
\noindent The next lemma gathers existence and uniqueness results for Neumann weak solutions of $(\ref{mdelta})$ as well as  $W_{loc}^{1,r}(V)$ estimates. 
\begin{lemma}\label{thm:mdelta}
For  $B\in L^{\infty}(V)$, $(\ref{mdelta})$ admits a unique Neumann weak solution $\mu\in W^{1,2}(V)$. In addition, $\mu\in W^{1,r}(V)$ for all $r>1$ and, for compact sets $K\subset K' \subset V$, $||\mu||_{W^{1,r}(K)}$ is bounded uniformly by a constant that depends only on $K, K'$ and $||B||_{L^{\infty}(K')}$. 
\end{lemma}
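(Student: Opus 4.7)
The plan is to treat existence and uniqueness through a Fredholm-alternative argument for the formal adjoint operator $L\phi := -\Delta\phi + B\cdot D\phi$, then establish positivity via a coercive regularization, and finally bootstrap to obtain higher local regularity. For the first step I would regard $L$ as an unbounded operator on $L^2(V)$ with Neumann boundary condition $\partial_\nu \phi = 0$ and domain in $W^{2,2}(V)$. Since $B\in L^\infty(V)$, $L$ is a bounded zero-order perturbation of the Neumann Laplacian and therefore Fredholm of index zero, using that $(L+cI)^{-1}$ is compact on $L^2$ for $c$ large by Rellich. Its kernel is exactly the constants, so by the Fredholm alternative the adjoint operator, interpreted weakly with the dual boundary condition $(D\mu + \mu B)\cdot \nu = 0$, also has a one-dimensional kernel; a short integration by parts confirms that this weak adjoint condition coincides with the Neumann weak formulation in the statement. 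Uniqueness then becomes automatic once positivity is in hand, since any two probability solutions must differ by an element of a one-dimensional kernel with zero mean.

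To obtain a non-negative element of $\ker L^*$ I would use a coercive regularization. For $\epsilon>0$ the bilinear form $(\mu,\phi)\mapsto \epsilon\int_V \mu\phi + \int_V (D\mu\cdot D\phi + \mu B\cdot D\phi)$ is coercive on $W^{1,2}(V)$ (the $\epsilon$-term handles constants, Poincar\'e dispatches the zero-mean part, and the drift term is absorbed by Young's inequality), so Lax--Milgram produces a unique $\mu_\epsilon\in W^{1,2}(V)$ satisfying $\epsilon\int \mu_\epsilon \phi + a(\mu_\epsilon, \phi) = (\epsilon/|V|)\int \phi$. Testing against $\phi\equiv 1$ forces $\int \mu_\epsilon = 1$; testing against $\mu_\epsilon^-$ combined with the weak maximum principle gives $\mu_\epsilon\geq 0$; and testing against $\mu_\epsilon$ itself yields a uniform $W^{1,2}$ bound, so I can extract a weak limit $\mu\geq 0$ with $\int \mu = 1$ that satisfies $L^*\mu = 0$ in the Neumann weak sense. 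Strict positivity then follows from the Harnack inequality for divergence-form equations applied to $-\text{div}(D\mu + \mu B)=0$.

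For the local $W^{1,r}$ regularity I would bootstrap. Rewriting the equation as $-\Delta \mu = \text{div}(\mu B)$ with $B\in L^\infty(K')$, and using Sobolev embedding to promote $\mu\in W^{1,2}(V)$ to $L^{2^*}_{\text{loc}}(V)$, interior Calder\'on--Zygmund estimates applied with a cutoff supported in $K'$ and equal to $1$ on $K$ give $\mu\in W^{1,2^*}(K)$ with a bound depending only on $\|B\|_{L^\infty(K')}$, the cutoff, and $\|\mu\|_{L^1(V)}=1$. Iterating this step reaches any prescribed $r<\infty$ in finitely many iterations.

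The main obstacle is the positivity step: the Fredholm argument alone only guarantees a nontrivial element of $\ker L^*$, with no \emph{a priori} sign information, and Harnack applied to a signed solution does not by itself rule out sign changes. The coercive regularization together with the maximum-principle test against $\mu_\epsilon^-$ is the device that injects sign information into the limit, and this is the most delicate portion of the argument; the remainder is standard linear elliptic theory.
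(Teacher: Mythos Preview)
Your existence, uniqueness, and positivity outline is considerably more detailed than the paper's proof, which simply cites \cite{lady} for those three facts and then focuses on the quantitative local estimate. Your Fredholm-plus-regularization route is reasonable, though the step ``testing against $\mu_\epsilon^-$ gives $\mu_\epsilon\geq 0$'' only closes when $\epsilon$ is large compared to $\|B\|_{L^\infty}^2$; for small $\epsilon$ the drift term cannot be absorbed. Positivity for all $\epsilon>0$ is true, but it comes from the positivity of the resolvent $(L+\epsilon)^{-1}$ on the non-divergence side (weak maximum principle for $-\Delta\phi+B\cdot D\phi+\epsilon\phi$ with Neumann data) and then duality, not from the energy test you wrote. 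You already flagged this as the delicate point, so this is a refinement rather than a new obstacle.

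The substantive gap is in the local $W^{1,r}$ bound. Your bootstrap is seeded by ``Sobolev embedding to promote $\mu\in W^{1,2}(V)$ to $L^{2^*}_{loc}$,'' but the $W^{1,2}(V)$ norm you obtained in the existence step depends on the \emph{global} $\|B\|_{L^\infty(V)}$, not merely on $\|B\|_{L^\infty(K')}$. Thus your iteration yields a bound of the form $C(K,K',\|B\|_{L^\infty(K')},\|\mu\|_{W^{1,2}(V)})$, which is strictly weaker than the statement. This distinction is not cosmetic: the lemma is applied in the paper to drifts $b_\delta=q|Du_\delta|^{q-2}Du_\delta$ on shrinking domains $\Omega_\delta$, and $\|b_\delta\|_{L^\infty(\Omega_\delta)}$ blows up as $\delta\to 0$ while $\|b_\delta\|_{L^\infty(K')}$ stays bounded for fixed $K'\subset\subset\Omega$. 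The paper's device is to bypass the global $W^{1,2}$ norm entirely: Harnack's inequality on $K$ (with constant depending only on $K,K'$ and $\|B\|_{L^\infty(K')}$) together with $\int_V\mu=1$ gives
\[
\sup_K\mu\leq C\inf_K\mu\leq \frac{C}{|K|},
\]
so $\|\mu\|_{L^\infty(K)}$ is controlled by purely local data, and then a single invocation of interior $W^{1,r}$ regularity (cited from \cite{rus1}) finishes. Inserting this Harnack step in place of your Sobolev-embedding seed repairs the argument.
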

\begin{proof}
The existence, positivity, and uniqueness are proven in \cite{lady}. To obtain the local uniform $W^{1,r}$ estimate, we recall that Theorem 1.1 of \cite{rus1} yields a constant $C_1=C_1(K,  ||B||_{L^{\infty}(K')})>0$ such that  
\begin{equation} \label{eq:r}
||\mu||_{W^{1,r}(K)}\leq C_1||\mu||_{L^\infty(K)}.
\end{equation}
Then Harnack inequality and the normalization condition $\int_{V}\mu=1$ give some $C_2= C_2(K, K')>0$, such that
\begin{equation}\label{eq:r2}
\sup_{K} \mu \leq C_2 \inf_K \mu\leq  C_2 \frac{1}{|K|}. 
\end{equation}
Combining $(\ref{eq:r})$ and $(\ref{eq:r2})$ completes the proof. 
\end{proof}
\noindent If the drift in the KFP equation blows-up at the boundary, it is necessary to modify the notion of weak solution as follows. 
\begin{definition}\label{def:weak}
Given $b\in L^1_{loc}(\Omega)$ and $r>1$, we say  $\mu\in W^{1,r}_{loc}(\Omega)$ is a \emph{weak solution} of   
\begin{align}\label{eq:m}
\Delta \mu+div(\mu b)=0 \text{ in }\Omega,
\end{align}
if $\mu$ is a non-negative function such that, for all $\phi \in C^2_c(\Omega)$,
\begin{align*}
\int_{\Omega}[\Delta \phi(x)-b(x)\cdot D \phi(x)]\mu(x)\ dx=0.
\end{align*}
Moreover, $\mu$ is a \emph{proper weak solution} if $\int_\Omega \mu=1$. 
\end{definition}
\begin{remark}
The class of test functions in Definition \ref{def:weak} can be extended to $C^2$ functions that are constant in a neighborhood of $\partial\Omega$. \end{remark}

\noindent The following lemma shows that weak solutions of $(\ref{eq:m})$ may arise as the limit of Neumann weak solutions. The rest of the section provides conditions under which the limit is a proper weak solution.

\begin{lemma} \label{weakm}Let $m_\delta \in W^{1,r}(\Omega_\delta)$  be a Neumann weak solution of $(\ref{mdelta})$ for velocity fields $b_\delta\in L^\infty(\Omega_\delta)$. Assume that, as $\delta \to 0$, $b_\delta \to b$ locally uniformly. Then, along subsequences, $m_\delta \to m \in W^{1,r}_{loc}(\Omega)$ and $m$ is a weak solution of  
\begin{eqnarray*}
\Delta m+div(m b)=0 \text{ in } \Omega.
\end{eqnarray*}
\end{lemma}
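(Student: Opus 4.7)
The plan is to combine the interior $W^{1,r}$ bounds from Lemma \ref{thm:mdelta} with a diagonal compactness argument, and then pass to the limit in the Neumann weak formulation using test functions with compact support in $\Omega$.

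First, I would fix an exhaustion of $\Omega$ by compact sets $K_1 \subset K_2^\circ \subset K_2 \subset K_3^\circ \subset \cdots$ with $\bigcup_j K_j = \Omega$. For each $j$, choose $\delta_j > 0$ small enough that $K_{j+1} \subset \Omega_{\delta_j}$. Whenever $\delta < \delta_j$, Lemma \ref{thm:mdelta} applied with $V = \Omega_\delta$, $K = K_j$ and $K' = K_{j+1}$ yields
\[\|m_\delta\|_{W^{1,r}(K_j)} \leq C\bigl(K_j, K_{j+1}, \|b_\delta\|_{L^\infty(K_{j+1})}\bigr).\]
Because $b_\delta \to b$ locally uniformly, $\|b_\delta\|_{L^\infty(K_{j+1})}$ stays uniformly bounded as $\delta \to 0$, and therefore so does $\|m_\delta\|_{W^{1,r}(K_j)}$. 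Applying Rellich--Kondrachov on each $K_j$ and extracting a diagonal subsequence, I obtain $m_\delta \rightharpoonup m$ weakly in $W^{1,r}_{loc}(\Omega)$ and strongly in $L^r_{loc}(\Omega)$ for some $m \in W^{1,r}_{loc}(\Omega)$. Nonnegativity is preserved under the a.e.\ (subsequential) convergence, so $m \geq 0$.

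To identify the equation satisfied by $m$, I would fix $\phi \in C^2_c(\Omega)$ and set $\delta_\phi := d(\mathrm{supp}(\phi),\partial\Omega) > 0$. For $\delta < \delta_\phi$, $\phi$ restricts to an element of $W^{1,2}(\Omega_\delta)$, and the Neumann weak formulation for $m_\delta$ gives
\[\int_\Omega D\phi \cdot Dm_\delta \, dx + \int_\Omega (b_\delta \cdot D\phi)\, m_\delta\, dx = 0.\]
The first integral converges to $\int_\Omega D\phi \cdot Dm\, dx$ by the weak convergence $Dm_\delta \rightharpoonup Dm$ in $L^r(\mathrm{supp}\,\phi)$; the second converges to $\int_\Omega (b \cdot D\phi) m\, dx$ by the uniform convergence of $b_\delta$ on $\mathrm{supp}\,\phi$ combined with the strong $L^r$ convergence of $m_\delta$. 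Since $\phi$ has compact support, integration by parts gives $\int_\Omega D\phi \cdot Dm\, dx = -\int_\Omega \Delta\phi \cdot m\, dx$, and rearranging the limit identity yields
\[\int_\Omega [\Delta \phi - b \cdot D\phi]\,m\, dx = 0,\]
which is exactly the weak formulation in Definition \ref{def:weak}.

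The argument is essentially routine once the local $W^{1,r}$ bound from Lemma \ref{thm:mdelta} is in hand; the only real bookkeeping concerns the diagonal extraction along the exhaustion $\{K_j\}$. I would note in passing that the mass constraint $\int_\Omega m = 1$ need not persist in the limit, since mass can escape toward $\partial\Omega$; recovering the normalization (i.e., producing a \emph{proper} weak solution) requires the additional uniform decay near the boundary captured by the estimate (\ref{eq:gam|}) established later in the paper.
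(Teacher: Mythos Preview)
Your proof is correct and follows essentially the same approach as the paper: invoke the local $W^{1,r}$ bounds from Lemma~\ref{thm:mdelta}, extract a locally convergent subsequence via compactness, and pass to the limit in the weak formulation against compactly supported test functions. The only cosmetic difference is that the paper works directly with the integrated-by-parts form $\int(\Delta\phi - b_\delta\cdot D\phi)m_\delta$ and uses local uniform convergence of $m_\delta$ (via Sobolev embedding for large $r$), whereas you use the gradient form and rely on weak $W^{1,r}$/strong $L^r$ convergence before integrating by parts at the end; both routes are equivalent here.
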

 
\begin{proof}
Recalling that $||m_\delta||_{W^{1,r}(K)}$ is uniformly bounded for each compact $K\subset \Omega$, it is possible to extract a subsequence such that $m_\delta$ converges locally uniformly to a non-negative $W^{1,r}_{loc}(\Omega)$ function $m$. To see that $m$ satisfies the desired equation, consider a test function $\phi\in C^2_c(\Omega)$. Since $b_\delta$ converges uniformly to $b$ in the support of $\phi$, $\Delta \phi -b_\delta \cdot D \phi$ converges uniformly to $\Delta \phi -b \cdot D\phi$. As $m_\delta \to m$ uniformly in the support of $\phi$, letting $\delta \to 0$ in the weak formulation $\int_{\Omega_\delta}(\Delta \phi -b_\delta \cdot D \phi)m_\delta$ leads to the desired result. 
\end{proof}
\noindent It follows from Fatou's lemma that the limit $m$ in the above lemma is integrable on $\Omega$ and $\int_{\Omega}m\leq 1$. To ensure that $m$ is a proper weak solution it is necessary to impose additional structural conditions on the vector fields $b_\delta$. In \cite{rus1}, Bogachev and R{\"o}kner give conditions on the drifts under which the sequence of weak solutions $m_\delta$ is uniformly tight, which in turn yields $L^1(\Omega)$ convergence of $m_\delta$ to $m$. The version of the theorem relevant to our setting appears below. 
\begin{lemma}[Lemma 1.1 in \cite{rus1}]\label{thm:mpro}
Let $U_k$ by a sequence of nested domains and $\mu_k$ the corresponding sequence of Neumann weak solutions to $\Delta \mu_k+div(\mu_k b_k)=0$ on $U_k$. Suppose that $b_k\in L^1_{loc}(U_k)$ and that, for every $R>0$, there exists $\alpha_R>n$ such that 
\begin{align*}
\kappa_R:=\sup_k\int_{U_R}|b_k(x)|^{\alpha_R}dx<\infty.
\end{align*}
If, in addition, there exists $V\in C^2(U)$ such that, for some $c_k \to \infty$, $U_k=\{V<c_k\}$, $\{V=c_k\}$ has Lebesgue measure zero, 
\begin{align}\label{v1}
\lim_{d(x, \partial U)\to 0}V(x)=\infty, 
\end{align}
and
\begin{align}\label{v2}
\lim_{d(x, \partial U)\to 0}\sup_k [\Delta V- D V \cdot b_k]=-\infty,
\end{align}
then the sequence $\mu_k$ is uniformly tight. 
\end{lemma}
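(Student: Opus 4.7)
The plan is a Lyapunov / drift-condition argument. I would test the Neumann weak formulation of $\Delta \mu_k + \operatorname{div}(\mu_k b_k) = 0$ on $U_k$ against the Lyapunov function $V$ itself, and exploit the divergence of $\Delta V - b_k \cdot DV$ near $\partial U$ to force $\mu_k$ to concentrate on compact subsets of $U$ uniformly in $k$.

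First, observe that $\overline{U_k} = \{V \le c_k\}$ is a compact subset of $U$ since $V \to \infty$ as $d(x,\partial U) \to 0$, so $V \in C^2(\overline{U_k}) \subset W^{1,2}(U_k)$ is an admissible test function. Substituting $\phi = V$ into
\[
\int_{U_k} D\phi \cdot D\mu_k + \mu_k\, b_k \cdot D\phi \, dx = 0
\]
and integrating the first term by parts (with outward normal $DV/|DV|$ on the regular level set $\partial U_k = \{V = c_k\}$) yields the identity
\[
\int_{U_k} \mu_k (\Delta V - b_k \cdot DV)\, dx \;=\; \int_{\partial U_k} \mu_k\, |DV|\, dS \;\ge\; 0.
\]

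Fix $R > 0$ large. By the drift hypothesis there is $M(R) > 0$ with $M(R) \to \infty$ as $R \to \infty$ such that $\sup_k (\Delta V - b_k \cdot DV) \le -M(R)$ on $\{V \ge R\}$. Splitting the above integral over $\{V \le R\}$ and $\{V > R\}$ and bounding the first piece in absolute value, I obtain
\[
M(R)\, \mu_k(\{V > R\}) \;\le\; \int_{\{V \le R\}} \mu_k\, |\Delta V - b_k \cdot DV|\, dx.
\]
The $\|\Delta V\|_{L^\infty(\{V\le R\})}$ contribution is immediate since $\int_{U_k}\mu_k = 1$. For the drift part, Hölder's inequality with exponents $\alpha_R$ and $\alpha_R'$ gives
\[
\int_{\{V \le R\}} \mu_k\, |b_k|\, |DV|\, dx \;\le\; \|DV\|_{L^\infty(\{V\le R\})}\, \|\mu_k\|_{L^{\alpha_R'}(\{V \le R\})}\, \|b_k\|_{L^{\alpha_R}(\{V \le R\})},
\]
where $\|b_k\|_{L^{\alpha_R}}$ is controlled by $\kappa_R^{1/\alpha_R}$. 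The uniform-in-$k$ bound on $\|\mu_k\|_{L^{\alpha_R'}(\{V \le R\})}$ follows from a Harnack-plus-normalization argument in the spirit of Lemma \ref{thm:mdelta}, available because $\alpha_R > n$ furnishes the interior embedding $W^{1,\alpha_R}_{loc} \hookrightarrow L^\infty_{loc}$.

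Combining, $\mu_k(\{V > R\}) \le D_R/M(R)$ with $D_R$ independent of $k$; since $M(R) \to \infty$ and $\{V \le R\}$ is compact in $U$, uniform tightness follows. The main obstacle is the last estimate: obtaining the $L^\infty_{loc}$ bound on $\mu_k$ uniformly in $k$ with only $L^{\alpha_R}$-control of $b_k$, which requires upgrading Lemma \ref{thm:mdelta} (stated for $L^\infty$ drifts) to the $L^p$ case — and this is precisely the role of the hypothesis $\alpha_R > n$. A secondary technical point is ensuring that $c_k$ may be taken to be a regular value of $V$ so that the boundary term and the trace of $\mu_k$ on $\partial U_k$ are well-defined, which is possible by Sard's theorem after perhaps slightly perturbing $c_k$.
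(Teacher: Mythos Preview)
The paper does not supply its own proof of this lemma; it is quoted from \cite{rus1}, and the closest proxy is the Appendix proof of Lemma~\ref{lemma:uni}, which explicitly mimics that argument. Your Lyapunov strategy is the correct one and aligns with the method there, but with one technical difference and one genuine gap. The difference: rather than testing with $V$ directly and picking up a boundary integral on $\partial U_k$, the paper tests with $\phi(V)$ for a smooth nondecreasing concave cutoff $\phi$ that is eventually constant; then $\phi(V)$ is constant near the boundary, no trace or level-set regularity is needed, and $\phi''\le 0$ yields the sign $\int\phi'(V)(\Delta V - b_k\cdot DV)\,\mu_k \ge 0$ directly. This neatly removes the Sard-type caveat you flagged.

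The gap is in your final step. You conclude $\mu_k(\{V>R\})\le D_R/M(R)$, where $D_R$ is built from $\|\Delta V\|_{L^\infty(\{V\le R\})}$, $\|DV\|_{L^\infty(\{V\le R\})}$, $\kappa_R$, and the Harnack/De~Giorgi constant for $\mu_k$ on $\{V\le R\}$ --- all of which may grow with $R$. Nothing in the hypotheses forces $D_R/M(R)\to 0$, so tightness does not yet follow. The fix, carried out in the paper's Appendix, is to choose a \emph{fixed} level $R_0$ with $\sup_k(\Delta V - b_k\cdot DV)\le 0$ on $\{V>R_0\}$, set $S:=\sup_k\int_{\{V\le R_0\}}|\Delta V - b_k\cdot DV|\,\mu_k$ once (your H\"older/Harnack machinery bounds this on the single compact $\{V\le R_0\}$), and then observe that for every $R>R_0$ the split gives $M(R)\,\mu_k(\{V>R\})\le S$. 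Since $S$ is independent of $R$ and $M(R)\to\infty$, uniform tightness follows.
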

\noindent A function $V$ satisfying the above conditions is referred to as a Lyapanov function. The connection between Lyapanov functions and existence and uniqueness of weak solutions of $(\ref{eq:m})$ has been extensively studied in $\cite{rus1}$, as well as in $\cite{ua}$.  
\\\\
It follows from the Lemma \ref{thm:mpro} that, if there exists a $V$ satisfying $(\ref{v1})$, and 
\begin{align}\label{vb2}
\lim_{d(x, \partial U)\to 0}\Delta V- D V \cdot b=-\infty,
\end{align}
then, setting $b_k=b\chi_{U_k}$, we obtain a sequence of measures $m_k$ that converge to a proper weak solution $m$ of 
\begin{align}\label{eq:mbb}
\Delta m+div(m b)=0 \text{ in } \Omega.
\end{align}
It is proven in \cite{ua} that such a solution is unique.\\\\
In the present setting, these results will be applied by considering $b(x)$ satisfying, for some $C>1$ and $\epsilon \in (0,C-1)$,
\begin{align} \label{eq:asym}
\lim_{d(x)\to 0}(b(x)\cdot D d(x))d(x)=-C,
\end{align}
and $V(x)=d(x)^{-C+1+\epsilon}$, which satisfies $(\ref{v1})$ and $(\ref{vb2})$. This is stated precisely in the next result, the proof of which appears in the Appendix. 
\begin{lemma}\label{lemma:uni}
Given $b: \Omega\to \mathbb{R}^d$ satisfying $(\ref{eq:asym})$ with $C>1$, there exists a unique proper weak solution $m\in W^{1,r}_{loc}(\Omega)$ of   
\begin{align*}
\Delta m+div(m b)=0 \text{ in } \Omega,
\end{align*}
and, for every $\epsilon \in (0,C-1)$, there exists a $\delta =\delta(C, \epsilon)$, such that
\begin{align*}
\int_{\Omega} d^{-C-1+\epsilon}m\leq \hat{C}=\hat{C}(C, ||b||_{L^\infty(\Omega_\delta)}).
\end{align*}

\end{lemma}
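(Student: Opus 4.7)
The plan is to use a power of the distance function as a Lyapunov function. Fix $\epsilon \in (0, C-1)$, set $\beta := C - 1 - \epsilon > 0$, and define $V(x) := d(x)^{-\beta}$, which lies in $C^2(\Omega)$ by the regularity of $d$ from Section 2.1 and blows up at $\partial\Omega$. A direct computation, using $|Dd| = 1$ and boundedness of $|\Delta d|$ near $\partial\Omega$, gives
\[
\Delta V - b \cdot DV = \beta\, d^{-\beta - 2}\bigl[(\beta+1)|Dd|^2 + d\,(b \cdot Dd)\bigr] - \beta\, d^{-\beta - 1} \Delta d.
\]
Condition $(\ref{eq:asym})$ forces $d\,(b \cdot Dd) \to -C$, so the bracket tends to $(\beta+1)-C = -\epsilon$ and the leading term behaves like $-\epsilon\beta\, d^{-(\beta+2)} \to -\infty$, with the $\Delta d$ contribution of strictly lower order. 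Thus $V$ satisfies the Lyapunov hypotheses $(\ref{v1})$ and $(\ref{vb2})$.

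Existence and uniqueness then follow from the framework laid out above the lemma. Take $U_k := \Omega_{1/k}$ and $b_k := b|_{U_k} \in L^\infty(U_k)$, which is legitimate once $1/k$ is smaller than any $\delta$ on which $b$ is essentially bounded. Lemma \ref{thm:mdelta} supplies a unique Neumann weak solution $m_k$ on each $U_k$; Lemma \ref{thm:mpro} applied with $V$ delivers uniform tightness; and Lemma \ref{weakm} extracts a subsequential limit $m \in W^{1,r}_{\mathrm{loc}}(\Omega)$ that is a proper weak solution of $\Delta m + \mathrm{div}(mb) = 0$ on $\Omega$. Uniqueness is the main result of \cite{ua}, whose asymptotic hypothesis $\lim_{d(x)\to 0} d(x)b(x) = C\nu$ is equivalent to $(\ref{eq:asym})$ since $Dd = -\nu$ in a neighborhood of $\partial\Omega$.

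For the quantitative estimate, choose $\delta = \delta(C,\epsilon) > 0$ so small that on $\Omega \setminus \Omega_\delta$ the leading-order asymptotic above dominates the error, giving $\Delta V - b \cdot DV \leq -\tfrac{\epsilon \beta}{2}\, d^{-(\beta + 2)}$. On $\Omega_\delta$, $V$ and $DV$ are bounded and $\|b\|_{L^\infty(\Omega_\delta)} < \infty$, so $|\Delta V - b \cdot DV| \leq M$ for some $M = M(C, \epsilon, \|b\|_{L^\infty(\Omega_\delta)})$. For $k$ large enough that $\Omega_\delta \subset U_k$, test the Neumann weak formulation for $m_k$ with $V \in W^{1,2}(U_k)$ and integrate by parts to obtain
\[
\int_{U_k}(\Delta V - b \cdot DV)\, m_k\, dx = \int_{\partial U_k}(DV \cdot \nu_k)\, m_k\, dS = \beta\, k^{\beta+1} \int_{\partial U_k} m_k\, dS \geq 0,
\]
using $\nu_k = -Dd$ and $|Dd| = 1$ on $\partial U_k$. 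Splitting the left-hand side over $U_k \cap (\Omega \setminus \Omega_\delta)$ and $\Omega_\delta$, using the Lyapunov bound on the former and the crude bound on the latter, together with $\int_{\Omega_\delta} m_k \leq 1$, yields
\[
\tfrac{\epsilon\beta}{2} \int_{U_k \cap (\Omega \setminus \Omega_\delta)} d^{-(\beta+2)}\, m_k\, dx \leq M.
\]
Combined with $\int_{\Omega_\delta} d^{-(\beta+2)}\, m_k \leq \delta^{-(\beta+2)}$, this shows $\int_{U_k} d^{-(\beta+2)} m_k$ is uniformly bounded in $k$. Since $m_k \to m$ locally uniformly (Lemma \ref{weakm}), Fatou's lemma passes the bound to $\int_\Omega d^{-(\beta+2)} m \leq \hat{C}$, and $\beta + 2 = C + 1 - \epsilon$ gives the claimed estimate.

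The main obstacle is arranging the integration by parts so the boundary flux on $\partial U_k$ has a favorable sign. Here the Neumann condition satisfied by $m_k$ exactly cancels the interior drift term in the identity for $\int(\Delta V - b \cdot DV) m_k$, leaving only the non-negative normal-derivative contribution on $\partial U_k$. This sign is what converts a bilateral identity into the one-sided inequality whose stability in $k$ produces the desired a priori bound.
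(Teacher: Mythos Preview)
Your argument is correct, but it diverges from the paper's at the key step. You work with the approximating Neumann solutions $m_k$: testing the weak formulation on $U_k=\Omega_{1/k}$ against $V$ itself, you integrate by parts and use the explicit sign of $DV\cdot\nu_k$ on $\partial U_k$ to obtain $\int_{U_k}(\Delta V-b\cdot DV)\,m_k\geq 0$, then split and pass to the limit via Fatou. The paper instead works directly with the limiting proper weak solution $m$ on $\Omega$: it composes $V$ with a smooth nondecreasing concave cutoff $\phi$ so that $\phi(V)$ is constant near $\partial\Omega$ and hence an admissible test function against $m$; the concavity $\phi''\leq 0$ throws away the $|DV|^2$ term with the right sign, yielding $\int_{\{V\leq s\}}(\Delta V-b\cdot DV)\,m\geq 0$, from which the same splitting gives the bound. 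Your route is arguably more elementary in that it avoids the cutoff trick, but it only bounds the particular $m$ built as a limit of the $m_k$ and needs Fatou at the end; the paper's route gives the estimate for \emph{any} proper weak solution directly, which is a slightly stronger a priori statement (and of course equivalent once uniqueness from \cite{ua} is invoked). One small expository quibble: your final paragraph describes the Neumann condition as ``cancelling the interior drift term,'' but really the Neumann condition is what makes $\int_{U_k}(DV\cdot Dm_k+(b\cdot DV)m_k)=0$ hold with no boundary contribution in the first place; the boundary term $\int_{\partial U_k}(DV\cdot\nu_k)m_k$ appears only from integrating $DV\cdot Dm_k$ by parts.
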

\begin{remark}
The bound on $\int_{\Omega} d^{-C-1+\epsilon}m$ makes precise the intuition that $m$ ``approaches 0" near the boundary $\partial \Omega$. 
\end{remark}
\noindent It is proven in \cite{pll} that, for solutions $u$ of the HJB equation in $(\ref{eq:mainintro})$, $b=q|D u|^{q-2}D u$ satisfies precisely $(\ref{eq:asym})$ for $C=q/(q-1)$. This is discussed further in the next section. 
\subsection{The Hamilton-Jacobi-Bellman equation}
In this section, we fix $q\in (1,2]$ and $g\in L^\infty(\Omega)$ and consider the stationary HJB equation with infinite Dirichlet conditions 
\begin{eqnarray}\label{eq:hj}
\begin{cases}\label{udef}
-\Delta u+|D u|^q+\rho =g \text { in }\Omega,\\
\lim\limits_{d(x) \to 0} u(x)=\infty.  
\end{cases}
\end{eqnarray}
The estimates detailed here will be used to prove the HJB stability results in Theorem \ref{thm:hjstab}. 
\\\\
We begin by stating the definition of solutions for concreteness.

\begin{definition}
For $1<q\leq 2$ and $g\in L^\infty(\Omega)$, $(u, \rho) \in W^{2,r}_{loc}(\Omega)\times \mathbb{R}$ is an \emph{explosive solution} of $(\ref{udef})$ if $u$ is a viscosity solution in $\Omega$ and 
\begin{align*}
\lim_{d(x)\to 0^+}u(x)=\infty.
\end{align*}
\end{definition}
\noindent The existence and uniqueness of solutions to such HJB equations was proven in \cite{pll}, along with several local estimates. These results are collected in the next two theorems. The second theorem is a straightforward consequence of the bounds established in Theorem IV.I of \cite{pll}, with a slight modification to fit our ergodic framework. 
\begin{theorem}(Theorem VI.I in \cite{pll}))\label{thm: hj}
Let $1<q\leq 2$ and $g\in L^\infty(\Omega)$. The equation $(\ref{udef})$ admits an explosive solution $(u, \rho) \in W^{2,r}_{loc}(\Omega)\times \mathbb{R}$ for all $1<r<\infty$. 
\end{theorem}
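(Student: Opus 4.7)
The plan is the standard vanishing-discount procedure. For each $\lambda > 0$, Theorem IV.I of \cite{pll} yields a unique explosive solution $u_\lambda \in W^{2,r}_{loc}(\Omega)$ of the discounted problem
\begin{equation*}
-\Delta u_\lambda + |Du_\lambda|^q + \lambda u_\lambda = g \text{ in } \Omega, \qquad \lim_{d(x)\to 0}u_\lambda(x)=\infty.
\end{equation*}
Fixing a reference point $x_0 \in \Omega$, I would set $v_\lambda := u_\lambda - u_\lambda(x_0)$ and $\rho_\lambda := \lambda u_\lambda(x_0)$, so that $v_\lambda(x_0) = 0$ and
\begin{equation*}
-\Delta v_\lambda + |Dv_\lambda|^q + \lambda v_\lambda + \rho_\lambda = g \quad \text{in } \Omega.
\end{equation*}

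Next I would establish $\lambda$-independent bounds. Comparison between the bounded constant subsolution $c_* = -\|g\|_\infty/\lambda$ (dominated by $u_\lambda$ on every $\partial\Omega_\delta$ by the boundary blow-up) and $u_\lambda$ gives $u_\lambda \geq -\|g\|_\infty/\lambda$, hence $\rho_\lambda \geq -\|g\|_\infty$. A matching upper bound on $\rho_\lambda$, together with local $L^\infty$ and gradient control of $v_\lambda$ on each $\Omega_\delta$, follows from the Bernstein-type interior estimates of Theorem IV.I in \cite{pll}; these are uniform in $\lambda$ once one normalizes at $x_0$. Calder\'on--Zygmund then upgrades this to uniform $W^{2,r}_{loc}$ bounds. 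Along a subsequence $\lambda_k \to 0$, $\rho_{\lambda_k} \to \rho \in \mathbb{R}$ and $v_{\lambda_k} \to u$ strongly in $C^1_{loc}(\Omega)$ and weakly in $W^{2,r}_{loc}(\Omega)$. Because $\lambda_k v_{\lambda_k} \to 0$ locally uniformly, the limit $(u, \rho)$ solves $-\Delta u + |Du|^q + \rho = g$ in $\Omega$, both a.e.\ and in the viscosity sense, with $W^{2,r}_{loc}$ regularity by bootstrapping.

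The delicate point, and the main obstacle, is ensuring that the explosive boundary condition passes to the limit: we need a lower barrier on $v_\lambda$ near $\partial\Omega$ that is uniform in $\lambda$ and still blows up. I would import the explicit boundary barriers constructed in \cite{pll}: for the Hamiltonian $|p|^q$ with $1 < q \leq 2$, one builds a subsolution of the form $\Phi(x) = A\, d(x)^{-(2-q)/(q-1)} - B$ (with a logarithmic modification in the borderline case $q = 2$) whose coefficients depend only on $q$, $\|g\|_\infty$, and the $C^2$ geometry of $\partial\Omega$, and in particular not on $\lambda$. Comparing $\Phi$, shifted by an additive constant chosen to enforce $\Phi \leq v_\lambda$ on the hypersurface $\partial\Omega_{\epsilon_0}$, with $v_\lambda$ on the shell $\Omega \setminus \Omega_{\epsilon_0}$ yields $v_\lambda(x) \geq \Phi(x) - C$ near $\partial\Omega$, uniformly in $\lambda$. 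Passing to the limit preserves this lower barrier and forces $\lim_{d(x) \to 0} u(x) = \infty$, completing the argument.
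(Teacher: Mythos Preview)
The paper does not prove this theorem itself; it is stated as a citation of Theorem~VI.I in \cite{pll}. However, the sketch of proof of the subsequent Theorem~\ref{thm: bounds} reveals that the construction in \cite{pll} is precisely the vanishing-discount procedure you outline: $\rho$ arises as the local uniform limit of $\lambda u_\lambda$ as $\lambda\to 0$, with the explicit sub- and super-solutions $(C_q\mp\epsilon)d^{(q-2)/(q-1)}\mp C_\epsilon/\lambda$ (and the logarithmic analogues for $q=2$) supplying the uniform bounds. Your proposal is correct and matches this approach; in particular, your handling of the boundary blow-up via a $\lambda$-independent lower barrier for the normalized $v_\lambda$ is exactly the mechanism the paper invokes later in the proof of Theorem~\ref{thm:hjstab}.
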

  
\begin{theorem}\label{thm: bounds}
Let $1<q< 2$, $g\in L^\infty(\Omega)$ and $(u, \rho) \in W^{2,r}_{loc}(\Omega)\times \mathbb{R}$ be an explosive solution of $(\ref{eq:hj})$ with $u(x_0)=0$, for some fixed $x_0\in \Omega$ . There exist positive constants $C_1,C_2, C_3$, depending only on $\Omega$ and $||g||_{L^\infty(\Omega)}$, such that,
\begin{enumerate}[(i)]
\item $|\rho|\leq C_1$,
\item $|D u(x)|\leq C_2 d(x)^{-1/(q-1)}$, and 
\item $|u(x)|\leq C_3 d(x)^{(q-2)/(q-1)}$. 
\end{enumerate}
In addition, for any $K\subset K'$ compact subsets of $\Omega$, there exists a constant  $C_4=C_4(d(K,K'), n, r)>0$ such that 
\begin{enumerate}[(i)]
\item [(iv)]$||u||_{W^{2,r}(K)}\leq C_4(||u||_{L^\infty(K')}+||g||_{{L^q}(K')})$.
\end{enumerate} 
If $q=2$, the same estimates hold but $(iii)$ must be replaced by 
\begin{enumerate}[(i)]
\item [(iii)'] $|u(x)|\leq C_3 \log(d(x))$.
\end{enumerate}

\end{theorem}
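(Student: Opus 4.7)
Estimates (ii), (iii), and the $q=2$ replacement are essentially Theorem IV.I of \cite{pll}, which establishes the analogous local bounds for the non-ergodic equation $-\Delta u+|Du|^q+\lambda u=g$. The Bernstein-type arguments there depend only on the $L^\infty$-norm of the right-hand side and on the geometry of $\Omega$, so after rewriting our ergodic equation as $-\Delta u+|Du|^q=g-\rho$ they will apply with effective source $g-\rho$ provided $|\rho|$ is controlled. I therefore plan to: (a) bound $|\rho|$ in terms of $||g||_\infty$ and $\Omega$; (b) quote \cite{pll} to obtain (ii), (iii), and the $q=2$ variant; and (c) derive (iv) from (ii) by standard Calder\'on--Zygmund theory.

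\textbf{Bounding $\rho$.} Since $u$ is explosive and $u(x_0)=0$, $u$ attains its infimum at some interior point $x^*\in\Omega$, and applying the viscosity supersolution inequality to the constant test function $\phi\equiv u(x^*)$ immediately gives $\rho\geq g(x^*)\geq -||g||_\infty$. For the matching upper bound I will compare $u$ with the explosive ergodic solution $(v,\mu_R)$ of $-\Delta v+|Dv|^q+\mu_R=||g||_\infty$ on a fixed interior ball $B_R(x_0)\Subset\Omega$, whose existence is guaranteed by Theorem \ref{thm: hj}. On $\overline{B_R(x_0)}$ the function $u$ is bounded while $v\to\infty$ on $\partial B_R(x_0)$, so $u-v$ attains its maximum at an interior point $y^*$; at $y^*$ one has $Du(y^*)=Dv(y^*)$ and $\Delta(u-v)(y^*)\leq 0$, and subtracting the two HJB equations forces $\rho\leq \mu_R$. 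Because translating $g$ by a constant translates the ergodic constant by the same amount, $\mu_R$ depends only on $R$, $n$, $q$, and $||g||_\infty$, yielding (i).

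\textbf{Bounds (ii)--(iv).} With $|\rho|\leq C_1$ in hand, the effective source $g-\rho$ is uniformly $L^\infty$-bounded by $||g||_\infty+C_1$, and (ii), (iii), and the $q=2$ analogue will follow from Theorem IV.I of \cite{pll} with constants depending only on $\Omega$ and $||g||_\infty$. For (iv), on any compact $K'\Subset\Omega$ estimate (ii) yields $||Du||_{L^\infty(K')}\leq C_2\,d(K',\partial\Omega)^{-1/(q-1)}$; rewriting the equation as $-\Delta u=g-\rho-|Du|^q$ and applying interior $W^{2,r}$ Calder\'on--Zygmund estimates on an intermediate set $K\subset K''\subset K'$ will produce the stated bound, with the $\rho$ and $|Du|^q$ terms absorbed into the constant.

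\textbf{Main obstacle.} The only genuinely new step is the upper bound on $\rho$ in (i): the minimum principle handles the lower bound for free, but the explosive boundary condition precludes an interior maximum of $u$, so controlling $\rho$ from above requires a comparison with an auxiliary explosive problem on an interior subdomain (as sketched) or an equivalent variational characterization of the ergodic constant. Every other step either cites \cite{pll} directly or invokes standard interior elliptic regularity.
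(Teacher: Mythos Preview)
Your proposal is correct. The only substantive divergence from the paper is in part (i). The paper bounds $\rho$ by recalling that in \cite{pll} it is constructed as the limit $\rho=\lim_{\lambda\to 0}\lambda u_\lambda$, where $u_\lambda$ solves the discounted problem $-\Delta u_\lambda+|Du_\lambda|^q+\lambda u_\lambda=g$ with infinite boundary values, and then sandwiches $u_\lambda$ between the explicit barriers $(C_q\mp\epsilon)d^{(q-2)/(q-1)}\mp C_\epsilon/\lambda$ (respectively $-(1\mp\epsilon)\log d\mp C_\epsilon/\lambda$ when $q=2$); multiplying by $\lambda$ and sending $\lambda\to 0$ gives $|\rho|\leq C_\epsilon(||g||_\infty)$. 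You instead argue directly on the ergodic equation: the lower bound comes for free from the interior minimum of $u$, and the upper bound from viscosity comparison with an auxiliary explosive ergodic solution $(v,\mu_R)$ on a fixed interior ball. Your route has the advantage of applying to any explosive pair $(u,\rho)$ without invoking how $\rho$ was built, and the comparison step is clean because $v$ is smooth (its source is constant), so it serves as a legitimate $C^2$ test function touching $u$ from above. On the other hand you still invoke Theorem~\ref{thm: hj} to furnish $(v,\mu_R)$, so the vanishing-discount machinery of \cite{pll} is present implicitly. For (ii)--(iv) the two arguments coincide: both cite Theorem~IV.I of \cite{pll} for the gradient bound, deduce (iii) by integrating (ii) from $x_0$, and obtain (iv) by rewriting the equation as a Poisson equation with locally bounded right-hand side and invoking interior Calder\'on--Zygmund estimates---exactly what the paper calls the ``bootstrap.''
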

\begin{proof}[Sketch of Proof of Theorem $\ref{thm: bounds}$]
The first result follows from the construction of $\rho$ in Theorem VI.I of \cite{pll} which shows that $\rho$ arises as the local uniform limit of $\lambda u_\lambda$ as $\lambda \to 0$, where $u_\lambda$ is the unique viscosity solution of 
\begin{align*}
\begin{cases}
-\Delta u_\lambda +|D u_\lambda|^q+\lambda u_\lambda&=g\text{ in }\Omega,\\
\lim\limits_{d(x)\to 0}u_\lambda(x)=\infty.
\end{cases}
\end{align*}
If $1<q<2$, for arbitrary $\epsilon>0$, $u_\lambda$ has as sub- and super-solutions $(C_q\mp\epsilon)d^{(q-2)/(q-1)}\mp C_\epsilon/ \lambda$, where $C_q=(q-1)^{(q-2)/(q-1)} (2-q)^{-1}$ and $C_\epsilon$ is a function of $||g||_{L^\infty(\Omega)}$. Similarly, for $q=2$, $u_\lambda$ has as sub- and super-solutions $-(1\mp\epsilon)\log(d)\mp C_\epsilon/ \lambda$, where $C_\epsilon$ is a function of $||g||_{L^\infty(\Omega)}$. Sending $\lambda\to 0$ yields the desired bound on $\rho$. \\\\ 
The second result $(ii)$  is proven in Theorem IV.I of \cite{pll} and $(iii)$ is an immediate consequence of $(ii)$ as $u(x_0)$ is fixed. The last interior estimate follows from a bootstrap argument. 
\end{proof}
\noindent In order to study the solution of the KFP equation near the boundary, it will be necessary to know the precise asymptotics of $D u$. In \cite{pll}, Lasry and Lions, and later Porretta and V{\'e}ron in \cite{por}, established the following results. 
\begin{theorem}\label{thm:asy}(Theorem I.1 and II.3 in \cite{pll}; Theorem 2.3 in \cite{por})
Let $1<q< 2$, $g\in L^\infty(\Omega)$ and $(u, \rho) \in W^{2,r}_{loc}(\Omega)\times \mathbb{R}$ be a solution of $(\ref{eq:hj})$. Then,
\begin{enumerate}[(i)]
\item $\lim\limits_{d(x)\to 0}\dfrac{u(x)}{d^{(q-2)/(q-1)}}=(q-1)^{(q-2)/(q-1)} (2-q)^{-1}$,
\item[and]
\item $\lim\limits_{d(x)\to 0}\dfrac{D u(x)\cdot \nu (x)}{d^{-1/(q-1)}}=(q-1)^{-1/(q-1)}.$
\end{enumerate}
\noindent The second result also holds for $q=2$, while the former must be replaced by  
\begin{enumerate}[(i)]
\item[(i)'] $\lim\limits_{d(x)\to 0}\dfrac{u(x)}{-\log (d(x))}=1$.
\end{enumerate}
\end{theorem}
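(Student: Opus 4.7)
The plan is to exploit the resonance of the HJB operator at the exponent $\alpha := (q-2)/(q-1)$, build explicit barriers of the form $C_q d^\alpha$ plus lower-order corrections, and then upgrade the scalar asymptotic (i) to the directional gradient asymptotic (ii) via a critical blowup argument. The special role of $\alpha$ comes from the following formal calculation: since $|Dd|=1$ on $\Omega\setminus\Omega_{\epsilon_0}$, the ansatz $\phi(x):=C d(x)^\alpha$ satisfies $|D\phi|^q=C^q|\alpha|^q d^{(\alpha-1)q}$ and $\Delta\phi=C\alpha(\alpha-1)d^{\alpha-2}+C\alpha\Delta d\cdot d^{\alpha-1}$. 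The value $\alpha=(q-2)/(q-1)$ is singled out by the identity $(\alpha-1)q=\alpha-2$, so both singular terms have order $d^{\alpha-2}$, and matching their coefficients against the bounded right-hand side forces the algebraic equation $C^{q-1}|\alpha|^q=\alpha(\alpha-1)$, whose positive root is $C_q=(q-1)^{(q-2)/(q-1)}(2-q)^{-1}$.

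Next I would promote the formal balance to one-sided barriers. For each $\epsilon>0$ I set $\phi^\pm_\epsilon:=(C_q\pm\epsilon)d^\alpha\pm A_\epsilon d^\beta$ in a strip $\{0<d<\delta_\epsilon\}\subset\Omega\setminus\Omega_{\epsilon_0}$, with $\beta\in(\alpha,0)$ chosen so that the correction produces a residual strictly smaller than $d^{\alpha-2}$ and thus absorbs both the bounded right-hand side $g-\rho$ and the lower-order $d^{\alpha-1}\Delta d$ term once $\delta_\epsilon$ is small. A direct computation shows that $\phi^-_\epsilon$ (resp.\ $\phi^+_\epsilon$) is then a classical sub- (resp.\ super-)solution of $-\Delta v+|Dv|^q=g-\rho$ on the strip. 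On the inner face $\{d=\delta_\epsilon\}$ I use Theorem \ref{thm: bounds}(iii) to pick $A_\epsilon$ so that $\phi^-_\epsilon\le u\le\phi^+_\epsilon$ there; near $\partial\Omega$ the ordering of the leading coefficients, combined with the blow-up of all three functions, handles the boundary comparison. The comparison principle then sandwiches $u$ between $\phi^\pm_\epsilon$ throughout the strip; dividing by $d^\alpha$ and letting $\epsilon\to 0$ yields (i).

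For (ii) I would run a critical blowup. Fix $x_0\in\partial\Omega$, set $\nu:=\nu(x_0)$ and $x_t:=x_0-t\nu$, so that $d(x_t)=t$, and define $v_t(y):=t^{-\alpha}u(x_t+ty)$ on $B_{1/2}(0)$. The exponents $2-\alpha$ and $(1-\alpha)q$ both equal $q/(q-1)$, so $v_t$ solves $-\Delta v_t+|Dv_t|^q=t^{q/(q-1)}(g-\rho)(x_t+ty)$ with right-hand side tending to $0$ uniformly. Combined with (i), Theorem \ref{thm: bounds}(iv) provides uniform $W^{2,r}_{loc}$ control on the family $\{v_t\}$, so along a subsequence $v_t\to v_0$ locally uniformly in $C^1$, with $-\Delta v_0+|Dv_0|^q=0$ in the halfspace $\{y\cdot\nu<1\}$ and the boundary profile $v_0(y)=C_q(1-y\cdot\nu)^\alpha$ forced by (i) and the uniqueness of the one-dimensional explosive profile for this equation. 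Evaluating $Dv_0(0)\cdot\nu=-C_q\alpha=(q-1)^{-1/(q-1)}$ and undoing the rescaling yields $t^{1/(q-1)}Du(x_t)\cdot\nu\to(q-1)^{-1/(q-1)}$, which is precisely (ii). The case $q=2$ proceeds identically with the degenerate ansatz $\phi=-\log d$ in place of $C_q d^\alpha$, for which the same resonance argument fixes the coefficient $1$.

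The main obstacle will be the comparison step of paragraph two, because both $u$ and the barriers blow up on $\partial\Omega$ and one cannot invoke a classical strip-comparison principle directly. The cleanest route is to revisit the $\lambda$-approximations $u_\lambda$ from the proof of Theorem \ref{thm: hj}: each $u_\lambda$ is bounded on $\{d\ge\eta\}$ for any $\eta>0$, classical comparison against $\phi^\pm_\epsilon$ corrected by an additive $\pm C_\epsilon/\lambda$ term is permissible, and the local uniform convergences $\lambda u_\lambda\to\rho$ and $u_\lambda-u_\lambda(x_0)\to u-u(x_0)$ established in \cite{pll} transfer the sandwich to $u$ in the limit $\lambda\to 0$.
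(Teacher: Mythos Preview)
Your proposal is correct and follows essentially the same route as the paper and its cited sources. Note that the paper does not itself prove Theorem~\ref{thm:asy}---it is quoted from \cite{pll} and \cite{por}---but the Appendix establishes a uniform refinement of (ii) (Lemma~\ref{lem:su}) via precisely the blowup you describe: center at an interior point at distance $\delta$ from $\partial\Omega$, rescale by $\delta^{-\alpha}$, and pass to a limiting equation $-\Delta v+C_q^{q-1}|Dv|^q=0$ on a half-space. Your barrier scheme for (i) likewise matches what is sketched in the proof of Theorem~\ref{thm: bounds}.

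Two remarks on points where your write-up can be tightened. First, the comparison obstacle you route through the $\lambda$-approximants admits a more direct fix: for the upper bound, translate the singularity inward by taking $(C_q+\epsilon)(d-\eta)^\alpha$ as supersolution, which blows up on the interior surface $\{d=\eta\}$ where $u$ is finite; comparison on the strip $\{\eta<d<\delta_\epsilon\}$ is then classical, and $\eta\to 0$ yields the bound. The lower barrier is handled symmetrically with $d+\eta$. Second, once (i) is available you do not in fact need the half-space Liouville statement you invoke: since $d(x_t+ty)/t\to 1-y\cdot\nu$, part (i) already forces $v_t(y)\to C_q(1-y\cdot\nu)^\alpha$ \emph{pointwise} on $B_{1/2}$, and the $W^{2,r}$ compactness upgrades this to $C^1$ convergence, identifying $Dv_0(0)$ directly. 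The paper's Appendix does invoke a Liouville theorem (Theorem~4.1 of \cite{por}) to identify the limit, but that is because Lemma~\ref{lem:su} asks for convergence uniform in $\|g\|_{L^\infty}$, and at that stage (i) is only known with a non-uniform rate; for Theorem~\ref{thm:asy} itself your shortcut is legitimate and slightly cleaner.
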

\begin{remark} Lemma $\ref{lem:su}$ in the Appendix is a slightly stronger version of this theorem, where the convergence is shown to be uniform in $||g||_{L^\infty(\Omega)}$. This stronger version will be necessary in Section 5. 
\end{remark}
\noindent Lastly, we provide a stability result particular to the types of couplings we are studying. 

\begin{theorem}\label{thm:hjstab}
Assume $(F1)$ and fix $x_0\in \Omega$. Let $\mu_n$ be a sequence of probability measures on $\Omega$ which converge in $L^1(\Omega)$ to $\mu$, and $(u_n, \rho_n)\in W^{2,r}_{loc}(\Omega)\times \mathbb{R}$ be the unique explosive solution of 
\begin{eqnarray*}
\begin{cases}
-\Delta u_n+|D u_n|^q+\rho_n =F(x;\mu_n)\text{ in }\Omega,\\
\lim\limits_{d(x) \to 0} u_n(x)=\infty,
\end{cases}
\end{eqnarray*}
subject to the normalization condition $u_n(x_0)=0$. Then there exists $(u, \rho)\in W^{2,r}_{loc}(\Omega)\times \mathbb{R}$, such that $\rho_n \to \rho$ and $u_n \to u$ locally uniformly as $n\to \infty$. Moreover, $(u,\rho)$ is the 
unique explosive solution of 
\begin{eqnarray}\label{ustab}
\begin{cases}
-\Delta u+|D u|^q+\rho =F(x;\mu)\text { in }\Omega,\\
\lim\limits_{d(x) \to 0} u(x)=\infty, u(x_0)=0.
\end{cases}
\end{eqnarray}
\end{theorem}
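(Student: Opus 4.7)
The plan is to combine the uniform local estimates from Theorem~\ref{thm: bounds} with the stability of viscosity/strong solutions and, crucially, the uniform boundary asymptotics referenced in the remark after Theorem~\ref{thm:asy}. First I would observe that $\{\mu_n\}$, being probability measures, is bounded in $L^1(\Omega)$, so by $(F1)$ the sequence $g_n := F(\cdot;\mu_n)$ is bounded in $L^\infty(\Omega)$ uniformly in $n$, and $\mu_n \to \mu$ in $L^1(\Omega)$ together with the continuity part of $(F1)$ yields $g_n \to g := F(\cdot;\mu)$ in $L^\infty(\Omega)$.

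Next I would invoke Theorem~\ref{thm: bounds} with $g$ replaced by $g_n$: the constants there depend only on $\Omega$ and $\|g_n\|_{L^\infty(\Omega)}$, so I get uniform bounds
\[
|\rho_n|\leq C_1,\qquad |u_n(x)|\leq C_3\,\eta(d(x)),\qquad |Du_n(x)|\leq C_2\,d(x)^{-1/(q-1)},
\]
where $\eta(\cdot)$ is $d^{(q-2)/(q-1)}$ or $-\log d$, and also a local $W^{2,r}$ bound $\|u_n\|_{W^{2,r}(K)}\leq C_4(\|u_n\|_{L^\infty(K')}+\|g_n\|_{L^q(K')})$ on every pair $K\Subset K'\Subset \Omega$. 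Extracting a diagonal subsequence, I obtain $\rho_n\to\rho\in\mathbb{R}$, $u_n\to u$ locally uniformly in $\Omega$, and $u_n\rightharpoonup u$ in $W^{2,r}_{loc}(\Omega)$. Since $g_n\to g$ in $L^\infty(\Omega)$ and $Du_n\to Du$ locally in $L^r$, I can pass to the limit in the equation both in the $W^{2,r}_{loc}$ sense and in the viscosity sense, producing a $W^{2,r}_{loc}$ viscosity solution of $-\Delta u+|Du|^q+\rho = F(x;\mu)$ in $\Omega$. The normalization $u(x_0)=0$ is preserved by local uniform convergence.

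The main obstacle is verifying the infinite Dirichlet boundary condition $\lim_{d(x)\to 0}u(x)=\infty$ in the limit, since local uniform convergence on compact subsets of $\Omega$ does not by itself control behavior on $\partial\Omega$. Here I would invoke the stronger version of Theorem~\ref{thm:asy} promised in the remark (Lemma~\ref{lem:su} of the Appendix), which states that
\[
\frac{u_n(x)}{\eta(d(x))}\longrightarrow (q-1)^{(q-2)/(q-1)}(2-q)^{-1}\quad\text{as } d(x)\to 0,
\]
uniformly in $n$, because $\|g_n\|_{L^\infty(\Omega)}$ is bounded uniformly in $n$. In particular, for every $M>0$ there exist $\delta>0$ and $n_0$ with $u_n(x)\geq M$ for all $x$ with $d(x)<\delta$ and all $n\geq n_0$; passing to the local uniform limit gives $u(x)\geq M$ on the same boundary strip (adjusting $\delta$ if necessary), so $u(x)\to\infty$ as $d(x)\to 0$.

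Finally, since $(u,\rho)$ so constructed is an explosive solution with $u(x_0)=0$, the uniqueness statement from Theorem~\ref{thm: hj} (uniqueness of $\rho$ and of $u$ modulo additive constants, with the constant pinned by $u(x_0)=0$) identifies the limit unambiguously. This forces the full sequence $(u_n,\rho_n)$ to converge to $(u,\rho)$, completing the proof.
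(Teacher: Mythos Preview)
Your overall structure is correct and matches the paper's proof closely: uniform $L^\infty$ bounds on $g_n$ via $(F1)$, compactness of $(u_n,\rho_n)$ from Theorem~\ref{thm: bounds}, passage to the limit by viscosity stability, and full-sequence convergence by uniqueness. The one place where you diverge from the paper is in verifying the boundary blow-up of the limit $u$, and there your argument has a small inaccuracy.

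You invoke Lemma~\ref{lem:su} as giving uniform-in-$n$ asymptotics for $u_n(x)/\eta(d(x))$. But as stated, Lemma~\ref{lem:su} concerns the \emph{gradient} quantity $(b_m(x)\cdot\nu(x))\,d(x)$, i.e.\ part~(ii) of Theorem~\ref{thm:asy}, not part~(i). The remark you cite is a bit ambiguous, but the lemma itself does not assert what you wrote. Your idea can certainly be repaired---either integrate the uniform gradient asymptotics from a fixed interior point using $u_n(x_0)=0$, or observe that the blow-up profile in the proof of Lemma~\ref{lem:su} yields the needed lower bound along the way---but as written you are quoting a statement that is not there.

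The paper handles this step more simply: since $\sup_n\|F(\cdot;\mu_n)\|_{L^\infty}<\infty$, the functions $(C_q-\epsilon)\,d(x)^{(q-2)/(q-1)}-C_\epsilon$ (respectively $-(1-\epsilon)\log d(x)-C_\epsilon$ when $q=2$) are explosive subsolutions for \emph{every} equation in the sequence, with $C_\epsilon$ independent of $n$. Comparison gives $u_n\geq$ this common barrier for all $n$, and the lower bound survives the local-uniform limit. This avoids any appeal to sharp asymptotics and is the more direct route; your approach would work but brings in heavier machinery than necessary.
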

\begin{proof}
It follows from $(F1)$ that $F(x;\mu_n)$ and $F(x; \mu)$ are bounded in $L^\infty(\Omega)$.  Theorem $\ref{thm: bounds}$ yields that $\rho_n$ is bounded and hence  converges along a subsequence to a constant $\rho$. The sequence $u_n$ is bounded in $W_{loc}^{2,r}(\Omega)$ so, by Arzel\`a-Ascoli, we can extract a subsequence converging locally uniformly to $u \in W^{2,r}_{loc}(\Omega)$ such that $D u_n \to D u$ locally uniformly. By pointwise convergence, $u(x_0)=0$. 
\\\\
That $u(x)\to \infty$ as $d(x)\to 0$ follows from the fact that the sequence of equations have a common explosive subsolution. When $1<q<2$, one such subsolution is of the form $(C_q-\epsilon) d(x)^{(q-2)/(q-1)}-C_\epsilon$, where $C_q=(q-1)^{(q-2)/(q-1)} (2-q)^{-1}$, $\epsilon>0$, and $C_\epsilon=C_\epsilon(||F(x; \mu)||_{L^\infty(\Omega; L^1(\Omega))}, \epsilon, C_q)$. When $q=2$ the corresponding subsolution is $-(1-\epsilon) \log(d(x))$. 
\\\\
Lastly, in view of $(F1)$, $F(x; \mu_n)$ converges uniformly to $F(x; \mu)$. It follows from classic viscosity theory that $(u, \rho)$ is a viscosity solution of $(\ref{ustab})$. As the explosive solution satisfying $u(x_0)=0$ is unique, the limit is indepedent of the subsequence. 
\end{proof}
\subsection{Definition and Main Result}
We finish this section with the definition of a solution to the system $(\ref{eq:mainintro})$ and the main existence and uniqueness result stated in the introduction. 
\begin{definition}
A triplet $(u, \rho, m)\in W^{2,r}_{loc}(\Omega) \times \mathbb{R} \times W^{1,r}_{loc}(\Omega)$ is a \emph{solution} of $(\ref{eq:mainintro})$ if $(u, \rho)$ is an explosive solution of 
\begin{eqnarray*}
\begin{cases}
-\Delta u+|D u|^q+\rho =F(x; m)\text { in }\Omega,\\
\lim\limits_{d(x) \to 0} u(x)=\infty, 
\end{cases}
\end{eqnarray*}
and $m$ is a proper weak solution of 
\begin{align*}
\Delta m+div\left(m q|D u|^{q-2}D u\right)=0 \text{ in }\Omega.
\end{align*}
\end{definition}
\noindent We are now able to state the main result of the paper.
\begin{theorem}\label{thm:mainmain}
Assume that $1<q\leq 2$ and either $(F1)$ or $(F2)$ holds. Then  $(\ref{eq:mainintro})$ has a solution for all $r>1$. Moreover, if $(F3)$ also holds, then the solution is unique. 
\end{theorem}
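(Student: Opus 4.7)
The plan is to handle existence under $(F1)$ (non-local coupling) and $(F2)$ (local coupling) separately, then establish uniqueness under the additional $(F3)$. Both existence proofs rest on the boundary asymptotic for the KFP drift $b := q|Du|^{q-2}Du$: combining the two limits in Theorem \ref{thm:asy}, so that $Du$ is asymptotically normal to $\partial\Omega$, one finds $\lim_{d(x)\to 0}(b(x)\cdot Dd(x))d(x) = -q/(q-1)$. Since $C:=q/(q-1)>1$ for $1<q\leq 2$, Lemma \ref{lemma:uni} applies and supplies both the unique proper weak solution of the KFP equation and the decay estimate $(\ref{eq:gam|})$ with some $\gamma=\gamma(q)>0$ and constant $\hat{C}$ depending only on $q$ and $\|b\|_{L^\infty(\Omega_\delta)}$.

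For the non-local case, I would define a map $T:\mathcal{W}_\gamma(\Omega)\to\mathcal{W}_\gamma(\Omega)$ sending $\mu$ to the proper weak KFP solution $m_\mu$ associated to the drift $q|Du_\mu|^{q-2}Du_\mu$, where $(u_\mu,\rho_\mu)$ is the unique normalized explosive solution with $u_\mu(x_0)=0$ of the HJB equation with right-hand side $F(\cdot;\mu)$, provided by Theorem \ref{thm: hj}. Assumption $(F1)$ gives a uniform $L^\infty$ bound on $F(\cdot;\mu)$ since $\int\mu=1$; Theorem \ref{thm: bounds} then yields uniform bounds on $\|Du_\mu\|_{L^\infty(\Omega_\delta)}$, and Lemma \ref{lemma:uni} delivers $\|m_\mu\|_{\gamma,\Omega}\leq\hat{C}$ independent of $\mu$, so $T$ maps the whole space into a fixed ball. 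Compactness of $T$ follows from the local $W^{1,r}$ estimates of Lemma \ref{thm:mdelta} together with the uniform weighted integrability. Continuity reduces, via Theorem \ref{thm:hjstab} (turning $L^1$ convergence $\mu_n\to\mu$ into local uniform convergence of $Du_{\mu_n}$) and Lemma \ref{weakm} (passing to $W^{1,r}_{loc}$ convergence of $m_{\mu_n}$), to an equi-integrability argument at $\partial\Omega$ using the $d^{-\gamma-\varepsilon}$ tail bound. Schaefer's fixed point theorem then produces the desired solution.

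For the local case $(F2)$, the map $\mu\mapsto f(\mu)$ is not continuous from $L^1(\Omega)$ into $L^\infty(\Omega)$, so the above argument fails. Instead I would solve the approximating system $(\ref{eq:deltap})$ on $\Omega_\delta$ via Schauder in $C^{0,\alpha}(\Omega_\delta)$: given $\mu$, extend to $\tilde\mu$ via $(\ref{tilde})$, solve the HJB on all of $\Omega$ with data $f(\tilde\mu)$ (bounded by $\|f\|_\infty$ uniformly), and solve the Neumann KFP on $\Omega_\delta$ with the now-bounded drift using Lemma \ref{thm:mdelta}. The a priori $L^\infty$ bound on $f$ makes the HJB estimates of Theorem \ref{thm: bounds} uniform, and Lemma \ref{thm:mdelta} supplies the $C^{0,\alpha}$ compactness required for Schauder. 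To pass to the limit $\delta\to 0$, the local estimates of Theorems \ref{thm: bounds} and Lemma \ref{thm:mdelta} permit extracting locally uniformly convergent subsequences, Lemma \ref{weakm} identifies the limiting KFP equation on $\Omega$, and the stability portion of Theorem \ref{thm:hjstab}'s proof identifies the limiting HJB. The main obstacle is ensuring no mass escapes to the boundary, i.e. $\int_\Omega m=1$ in the limit; I would handle this by applying Lemma \ref{lemma:uni} uniformly in $\delta$, which requires the uniform-in-$\|f\|_\infty$ asymptotic (\ref{eq:asym}) provided by the sharper version of Theorem \ref{thm:asy} given in Lemma \ref{lem:su}.

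For uniqueness under $(F3)$, I would run the Lasry--Lions duality argument. Given two solutions $(u_i,\rho_i,m_i)$, $i=1,2$, subtract the HJB equations and pair with $m_1-m_2$, subtract the KFP equations and pair with $u_1-u_2$, and add; formally this yields
\[
\int_\Omega\!(F(x;m_1)-F(x;m_2))(m_1-m_2)\,dx + \int_\Omega\!(m_1+m_2)\,R(Du_1,Du_2)\,dx = 0,
\]
where $R\geq 0$ by strict convexity of $p\mapsto|p|^q$. The key nontrivial step, and the main obstacle of the whole argument, is justifying the integrations by parts given the boundary blow-up of $u_i$ and $Du_i$: I would carry them out on $\Omega_\delta$ and send $\delta\to 0$, using the pointwise asymptotics of Theorems \ref{thm: bounds}--\ref{thm:asy} against the weighted integrability $(\ref{eq:gam|})$ to show all boundary terms vanish (this is where Lemma \ref{lemma:uni} is essential). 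The resulting identity together with $(F3)$ forces $m_1=m_2$; the uniqueness clause of Theorem \ref{thm: hj}, combined with the normalization $u_i(x_0)=0$, then yields $u_1=u_2$ and $\rho_1=\rho_2$.
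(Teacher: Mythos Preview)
Your proposal is correct and follows essentially the same three-part strategy as the paper: Schaefer in $\mathcal{W}_\gamma$ for $(F1)$, the approximating Neumann system $(\ref{eq:deltap})$ with passage to the limit for $(F2)$, and the Lasry--Lions duality with boundary localization for uniqueness under $(F3)$. The only cosmetic differences are that the paper uses Schaefer (not Schauder) also for the approximating problem, invokes Lemma~\ref{thm:mpro} directly (rather than Lemma~\ref{lemma:uni}) for uniform tightness as $\delta\to 0$, and implements the uniqueness localization via a smooth cutoff $\phi=\psi(d/\delta)$ rather than integrating on $\Omega_\delta$.
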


\section{Fixed point for the non-local case}
In this section we will use Schaefer's fixed point theorem in $\mathcal{W}_\gamma(\Omega)$ to prove the existence of a solution to $(\ref{eq:mainintro})$ when $(F1)$ holds. 
\\ 
\\
Let $2<\gamma<(2q-1)/(q-1)$. Define $T_1: \mathcal{W}_\gamma(\Omega) \to W^{1,r}_{loc}(\Omega)$ to be the map that sends measures $\mu$ to $q|D u|^{q-2}D u$, where $(u, \rho)$ is any explosive solution of 
\begin{eqnarray}
\begin{cases}\label{eq:hjmu}
-\Delta u+|D u|^q+\rho =F(x;{\mu}) \text{ in }\Omega,\\
\lim\limits_{d(x) \to 0} u(x)=\infty .
\end{cases}
\end{eqnarray}
It is immediate from Theorem $\ref{thm: hj}$ that $T_1$ is well defined as explosive solutions are unique up to a constant. \\\\
Define $T_2: T_1(\mathcal{W}_\gamma(\Omega)) \to \mathcal{W}_\gamma(\Omega)$ to be the map that sends $b$ in the image of $T_1$ to the proper weak solution $m$ of the associated continuity equation
\begin{eqnarray}\label{eq:fpb}
\Delta m+div(m b)=0 \text{ in } \Omega. 
\end{eqnarray}
In view of Lemma \ref{lemma:uni} and Theorem \ref{thm:asy}, $(\ref{eq:fpb})$ has a unique proper weak solution $m$ for $b$ in the image of $T_1$. 
\\\\
Finally define $T= T_2 \circ T_1$. In the next result, we apply Schaefer's fixed point theorem to $T$, thereby obtaining a solution of the MFG system $(\ref{eq:mainintro})$. 

\begin{theorem}\label{thm:fixed}
For $F$ satisfying $(F1)$ and $2<\gamma<(2q-1)/(q-1)$, $T:\mathcal{W}_\gamma(\Omega) \to \mathcal{W}_\gamma(\Omega)$ has a fixed point. 
\end{theorem}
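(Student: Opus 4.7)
The plan is to verify the hypotheses of Schaefer's fixed point theorem for $T = T_2 \circ T_1$: that $T$ maps $\mathcal{W}_\gamma(\Omega)$ into itself, that $T$ is continuous and compact, and that the set
\[
S := \{m \in \mathcal{W}_\gamma(\Omega) : m = \lambda T(m) \text{ for some } \lambda \in [0,1]\}
\]
is bounded in $\mathcal{W}_\gamma(\Omega)$.

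For well-definedness, given $\mu \in \mathcal{W}_\gamma(\Omega) \subset L^1(\Omega)$, assumption $(F1)$ puts $F(\cdot;\mu) \in L^\infty(\Omega)$, so Theorem \ref{thm: hj} yields an explosive solution $(u,\rho)$. By Theorem \ref{thm:asy}, the drift $b = T_1(\mu) = q|Du|^{q-2}Du$ satisfies
\[
\lim_{d(x)\to 0} (b(x)\cdot Dd(x))\,d(x) = -\tfrac{q}{q-1} =: -C,
\]
with $C>1$. Since $\gamma < (2q-1)/(q-1) = C+1$, I choose $\epsilon \in (0,C-1)$ with $\gamma \leq C+1-\epsilon$, and Lemma \ref{lemma:uni} provides a unique proper weak solution $m = T(\mu)$ satisfying $\|m\|_{\gamma,\Omega} \leq \int_\Omega d^{-C-1+\epsilon}m \leq \hat{C}$, where the last inequality uses $d \leq 1$. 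Hence $T(\mu) \in \mathcal{W}_\gamma(\Omega)$.

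For the a priori bound on $S$, if $m = \lambda T(m)$ with $\lambda \in (0,1]$, then $m/\lambda = T(m)$ is a proper weak solution, so $\int_\Omega m = \lambda \leq 1$ and in particular $m \geq 0$. By $(F1)$, $\|F(\cdot;m)\|_{L^\infty(\Omega)}$ is then controlled by a constant depending only on the unit ball in $L^1(\Omega)$. Theorem \ref{thm: bounds} and Theorem \ref{thm:asy} give corresponding uniform bounds on $\|b\|_{L^\infty(\Omega_\delta)}$, so Lemma \ref{lemma:uni} produces $\|m/\lambda\|_{\gamma,\Omega} \leq \hat{C}$ independent of $\lambda$ and $m$; multiplying by $\lambda \leq 1$ delivers the bound for $S$.

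For continuity, suppose $\mu_n \to \mu$ in $\mathcal{W}_\gamma(\Omega)$, hence in $L^1(\Omega)$. By $(F1)$, $F(\cdot;\mu_n) \to F(\cdot;\mu)$ in $L^\infty(\Omega)$, and Theorem \ref{thm:hjstab} gives $u_n \to u$ locally uniformly in $W^{2,r}_{loc}(\Omega)$, so the drifts $b_n \to b$ locally uniformly. Lemma \ref{weakm} yields, along a subsequence, $m_n \to m = T(\mu)$ in $W^{1,r}_{loc}(\Omega)$. To upgrade to convergence in the $\|\cdot\|_{\gamma,\Omega}$ norm I exploit the strict inequality $\gamma < (2q-1)/(q-1)$: selecting $\gamma' \in (\gamma, (2q-1)/(q-1))$ and reapplying Lemma \ref{lemma:uni} with the uniformly bounded drifts $b_n$, I obtain $\int_\Omega d^{-\gamma'} m_n \leq \hat{C}$ uniformly in $n$. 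Writing $d^{-\gamma} m_n = d^{\gamma'-\gamma}\cdot d^{-\gamma'} m_n$ shows that the tail of $d^{-\gamma} m_n$ near $\partial \Omega$ is uniformly small, so Vitali's theorem combined with the local uniform convergence gives $m_n \to m$ in $\mathcal{W}_\gamma(\Omega)$. Uniqueness of the limit removes the subsequence dependence. Compactness of $T$ is established by the same chain of bounds applied to any $\mathcal{W}_\gamma$-bounded sequence.

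The main obstacle I anticipate is precisely this upgrade from local convergence to convergence in the weighted norm. The Lyapunov-type estimate of Lemma \ref{lemma:uni} must be available with an exponent strictly larger than $\gamma$ in order to produce the uniform tail bound exploited by Vitali's theorem, and this is exactly why the strict upper bound $\gamma < (2q-1)/(q-1)$ is imposed on the choice of weight. The remaining ingredients — the HJB stability, the Lasry–Lions asymptotics for $Du$, and the $L^1$-to-$L^\infty$ regularization from $(F1)$ — are all packaged in the preliminaries and fit together without additional work.
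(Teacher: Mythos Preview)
Your proposal is correct and follows essentially the same route as the paper: Schaefer's fixed point theorem combined with the HJB stability of Theorem \ref{thm:hjstab}, the Lyapunov estimate of Lemma \ref{lemma:uni} applied at an auxiliary exponent $\gamma' \in (\gamma,(2q-1)/(q-1))$ to force uniform tightness of $d^{-\gamma}m_n$, and the resulting upgrade from local to weighted-$L^1$ convergence. The only cosmetic difference is your appeal to Lemma \ref{weakm} (which is stated for Neumann solutions on shrinking subdomains) where the paper instead invokes the local $W^{1,r}$ bounds of Lemma \ref{thm:mdelta} directly; the underlying compactness argument is identical.
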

\begin{proof}
Recall that to apply Schaefer's fixed point theorem, we need to show that $T$ is continuous, $T$ maps bounded sets to precompact sets, and if $F \subset W_\gamma$ is the set defined by
\begin{align}\label{eq:eff}
F=\{m\in \mathcal{W}_\gamma (\Omega)| m=\lambda T(m) \text{ for some }0\leq \lambda \leq 1\},
\end{align}
then $F$ is bounded.\\\\
We start by showing that $T$ is continuous. Consider a sequence $\mu_n$ that converges to $\mu$ in $\mathcal{W}_\gamma(\Omega)$. To simplify the notation, define $b_n:=T_1(\mu_n)$, $b:=T_1(\mu)$, and $m_n:=T_2(b_n)$. We first show that $b_n \to b$ locally uniformly. We then show that the sequence $m_n$ converges in $\mathcal{W}_\gamma(\Omega)$ to a measure $m$ and prove that $m=T_2(b)=T(\mu)$.\\\\
Fix $x_0 \in \Omega$ and let $(u_n, \rho_n)$ be the unique explosive solution of $(\ref{eq:hjmu})$ that corresponds to $F(x;{\mu}_n)$ and satisfies $u_n(x_0)=0$. Note that $T_1(\mu_n)= q|D u_n|^{q-2}D u_n$ by the remark following $(\ref{eq:hjmu})$. Since $\mu_n$ converges to $\mu$ in $L^1(\Omega)$, the local uniform convergence of $T_1(\mu_n)$ to $T_1(\mu)$ follows from Theorem $\ref{thm:hjstab}$. \\\\
In view of the local uniform bounds on $b_n$, Theorem \ref{thm:mdelta} implies the sequence of measures $m_n$ are uniformly bounded in $W^{1,r}_{loc}(\Omega)$ for all $r>1$ and, hence, locally uniformly bounded in $C^{0,\alpha}(\Omega)$ for $\alpha<1-n/r$. Along subsequences, the measures $m_n$ converge locally uniformly to a measure $m$. Consequently, we can pass to the limit in the weak formulation of $m_n$ and obtain that $m$ is a non-negative measure that satisfies the KFP equation with drift $b=T_1(\mu)$. It remains to show that $m_n$ converges to $m$ in $\mathcal{W}_\gamma(\Omega)$. 
\\\\
Lemma \ref{lemma:uni} yields that if  $(2q-1)/(q-1)>\gamma'>\gamma$, then the sequence $m_n$ is bounded in $\mathcal{W}_{\gamma'}(\Omega)$. This implies that the sequence $d^{-\gamma}m_n$ is uniformly tight. Indeed, for some constant $C$, independent of the choice of $\gamma$ and $\gamma'$,
\begin{align*}
C &\geq \int_{\Omega} d^{-\gamma'}m_n \geq \delta^{-\gamma'+\gamma}\left(\int_{\Omega\backslash \Omega_\delta}d^{-\gamma} m_n\right) 
\end{align*}
It then follows from the bound 
\begin{align*}
\int_{\Omega}d^{-\gamma}|m_n-m|\leq \int_{\Omega_\delta}d^{-\gamma}|m_n-m|+\int_{\Omega\backslash \Omega_\delta}d^{-\gamma} m_n+\int_{\Omega\backslash \Omega_\delta}d^{-\gamma} m,
\end{align*}
that $||m_n-m||_{\gamma,\Omega}\to 0$ as $n\to \infty$. 
\\\\
Since convergence in $\mathcal{W}_\gamma(\Omega)$ implies convergence in $L^1(\Omega)$, this also proves that $\int_\Omega m=1$. The uniqueness of $m$ then yields the convergence of the full sequence $m_n=T(\mu_n)$ to $T(\mu)$. 
\\\\
Next we show that $T$ maps bounded sets to precompact ones. Indeed let $\mu_n$ be a bounded sequence in $\mathcal{W}_\gamma(\Omega)$, and consider $u_n$, $b_n$, and $m_n$ defined as before. It suffices to show that there is a convergent subsequence of $m_n$. The sequence $\mu_n$ is bounded in $L^1(\Omega)$ and $F(x;\mu_n)$ is bounded in $L^\infty(\Omega; L^1(\Omega))$. Moreover, Theorem \ref{thm: bounds} yields that $b_n$ is locally uniformly bounded. Finally, Lemma \ref{thm:mdelta} implies that $m_n$ is uniformly bounded in $C^{0,\alpha}_{loc}(\Omega)$. Passing to subsequences, the measures $m_n$ converge locally uniformly to a measure $m$. Proceeding as in the continuity argument, we find that the sequence $d^{-\gamma}m_n$ is uniformly tight and $||m_n-m||_{\gamma,\Omega}\to 0$. 
\\\\
It remains to check that the set $F$ defined above is bounded. To see this, first observe that if $m=\lambda T(m)$, then $\int_\Omega m=\lambda\leq 1$. Hence $||F(x;m)||_{L^\infty(\Omega; L^1(\Omega))}$ is bounded by assumption $(F1)$. Next we apply Theorem \ref{thm: bounds} to find that $q|D u|^{q-2}D u$ is bounded on $\Omega_\delta$ by a constant $C(\delta, ||F(x;m)||_{L^\infty(\Omega; L^1(\Omega))})$. Lemma \ref{lemma:uni} then implies that $||m||_{\gamma,\Omega}$ is bounded. 
\end{proof}
\section{Fixed point argument for the local setting}\label{s:fixed}
In this section we will show existence of solutions to $(\ref{eq:mainintro})$ under assumption $(F2)$ by considering the system $(\ref{eq:deltap})$, introduced in Section 1, which approximates $(\ref{eq:mainintro})$. First, we will use Schaefer's fixed point theorem to show the system $(\ref{eq:deltap})$ has a solution. Then, using the uniform tightness results of Lemma $\ref{thm:mpro}$ and the local uniform bounds of Theorem $\ref{thm: bounds}$, we will show that solutions of $(\ref{eq:deltap})$ converge locally uniformly to solutions of $(\ref{eq:mainintro})$.
\subsection{The approximate problem}
We begin by making precise the notion of solution to the approximating system $(\ref{eq:deltap})$. We then show the existence of a solution using a fixed point theorem argument similar to that used in the non-local setting. 
\\\\
To utilize the asymptotic behavior of the explosive solutions to HJB equations at the boundary of $\Omega$ as well as the $W_{loc}^{1,r}$ bounds of Neumann weak solutions to KFP equations on $\Omega_\delta$, the KFP equation of $(\ref{eq:deltap})$ is solved in the subdomain $\Omega_\delta$ and the HJB equation is solved in $\Omega$. 


\begin{definition}\label{def: delta}
A triplet $(u_\delta, \rho_\delta, m_\delta)\in W^{2,r}_{loc}(\Omega)\times \mathbb{R}\times W^{1,r}(\Omega_\delta)$ is a \emph{solution} of $(\ref{eq:deltap})$ if $(u_\delta, \rho_\delta)$ is an explosive solution of 
\begin{eqnarray*}
\begin{cases}
-\Delta u_\delta+|D u_\delta|^q+\rho_\delta =F(x; {m}_\delta) \text{ in }\Omega,\\
\lim\limits_{d(x) \to 0} u_\delta(x)=\infty ,
\end{cases}
\end{eqnarray*}
and $m_\delta$ is a Neumann weak solution of 
\begin{align*}
\Delta m_\delta+div\left(m_\delta q|D u_\delta|^{q-2}D u_\delta\right)=0 \text{ in }\Omega_\delta.
\end{align*}
\end{definition}
\noindent When $F$ satisfies $(F2)$, existence of a solution to this system follows from Schaefer's fixed point theorem.  
\\\\
\noindent Indeed, let $T_1: C^{0,\alpha}(\Omega_\delta)\to W^{1,r}_{loc}(\Omega)$ be the map that sends measures $\mu$ to $q|D u|^{q-2}D u$, where $(u, \rho)$ is the explosive solution of 
\begin{eqnarray*}
\begin{cases}
-\Delta u+|D u|^q+\rho =F(x;{\mu}) \text{ in }\Omega\\
\lim\limits_{d(x) \to 0} u(x)=\infty,
\end{cases}
\end{eqnarray*}
which is well defined for all $\alpha>0$ and $1<r<\infty$. It follows from Theorem $\ref{thm: hj}$ that the system has a solution $u\in W^{2,r}_{loc}(\Omega)$ for every $r>1$, and $D u$ is unique. \\\\
Let $T_2: W^{1,r}_{loc}(\Omega) \to C^{0,\alpha}(\Omega_\delta)$ be the map that sends $b$ to the Neumann weak solution $m_\delta$ of
\begin{eqnarray*}
\begin{cases}
\Delta m_\delta+div(m_\delta b_\delta)=0 &\text{ in } \Omega_\delta\\
(D m_\delta +m_\delta b_\delta)\cdot\nu=0 &\text{ on }\partial \Omega_\delta,
\end{cases}
\end{eqnarray*}
where $b_\delta$ is the restriction of $b$ to the domain $\Omega_\delta$, which is also well defined. In view of Theorem \ref{thm: hj}, there is a unique Neumann weak solution $m_\delta$. Moreover, $m_\delta$ is in $W^{1,s}(\Omega_\delta)$ for all $1<s<\infty$ and, by Sobelov embedding, is in $C^{0, \alpha}(\Omega_\delta)$ for all $0<\alpha<1$.  Composing the two, we define $T= T_2 \circ T_1$. 

\begin{theorem}\label{thm:fixedpoint}
For $F$ satisfying $(F2)$ and $\alpha>0$, $T:C^{0,\alpha}(\Omega_\delta) \to C^{0,\alpha}(\Omega_\delta)$ has a fixed point. 
\end{theorem}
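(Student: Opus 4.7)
The plan is to apply Schaefer's fixed point theorem in $C^{0,\alpha}(\Omega_\delta)$, which requires three things: $T$ is continuous, $T$ maps bounded sets to precompact sets, and the set $\{\mu \in C^{0,\alpha}(\Omega_\delta) : \mu = \lambda T(\mu) \text{ for some } \lambda \in [0,1]\}$ is bounded.

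The boundedness of the rescaled fixed point set is essentially free in this setting. Under $(F2)$, $F(x;\mu) = f(\tilde\mu(x))$ is bounded pointwise by $\|f\|_{L^\infty(\mathbb{R})}$, independently of $\mu$. So the $L^\infty$-bound on the right-hand side of the HJB equation is uniform, and in fact $T$ sends the entire space into a single bounded set of $C^{0,\alpha}(\Omega_\delta)$, from which the Schaefer boundedness condition follows a fortiori. Precompactness of $T$ on bounded sets follows from the same chain of estimates, applied to sequences: the uniform $L^\infty$-bound on $F$ yields, via Theorem $\ref{thm: bounds}(ii)$, a uniform $L^\infty(\Omega_\delta)$-bound on $b = q|Du|^{q-2}Du$ with constant depending only on $\|f\|_{L^\infty(\mathbb{R})}$ and $\delta$; Lemma $\ref{thm:mdelta}$ then yields a uniform $W^{1,r}(\Omega_\delta)$-bound on $m_\delta$ for every $r > 1$; choosing $r$ large enough that $\alpha < 1 - n/r$, Rellich--Kondrachov gives precompactness in $C^{0,\alpha}(\Omega_\delta)$.

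The main obstacle is the continuity of $T$. Given $\mu_n \to \mu$ in $C^{0,\alpha}(\Omega_\delta)$, I first need that the extensions $\tilde\mu_n \to \tilde\mu$ uniformly on $\Omega$. Because the H\"older seminorms $L_n := \|\mu_n\|_{C^{0,\alpha}(\Omega_\delta)}$ are bounded and $\mu_n \to \mu$ uniformly on $\Omega_\delta$, this is a direct computation using the inf-convolution formula $(\ref{tilde})$. Continuity of $f$ together with compactness of $\overline\Omega$ then upgrade this to uniform convergence $F(\cdot\,;\mu_n) \to F(\cdot\,;\mu)$ in $L^\infty(\Omega)$. This uniform convergence is precisely what drives the proof of Theorem $\ref{thm:hjstab}$ (which uses $(F1)$ only to extract such uniform convergence of the coupling from $L^1$ convergence of $\mu_n$); repeating that argument verbatim gives local uniform convergence of the explosive solutions $(u_n, \rho_n) \to (u, \rho)$ and of $Du_n \to Du$, and thus uniform convergence of $b_n = T_1(\mu_n) \to T_1(\mu) = b$ on the compact subdomain $\overline{\Omega_\delta}$. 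Applied to the Neumann KFP problem for $m_{n,\delta} = T_2(b_n)$, the uniform $W^{1,r}(\Omega_\delta)$-estimates of Lemma $\ref{thm:mdelta}$ combined with the passage-to-the-limit argument of Lemma $\ref{weakm}$ (adapted to the Neumann setting) give that $m_{n,\delta}$ converges to the unique Neumann weak solution with drift $b$, namely $T_2(b)$, and the convergence is in $W^{1,r}(\Omega_\delta) \hookrightarrow C^{0,\alpha}(\Omega_\delta)$ for the chosen $r$. Uniqueness of the limit rules out the need for subsequences, yielding continuity of $T$.

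With all three hypotheses of Schaefer's theorem verified, the existence of a fixed point is immediate. The only genuinely delicate step in the argument is the passage through the extension operator $\mu \mapsto \tilde\mu$, which explains why the fixed point must be sought in the H\"older space $C^{0,\alpha}(\Omega_\delta)$ rather than in the weaker $L^1$-type space $\mathcal{W}_\gamma(\Omega)$ used for the non-local case.
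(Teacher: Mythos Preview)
Your proposal is correct and follows essentially the same route as the paper: Schaefer's theorem in $C^{0,\alpha}(\Omega_\delta)$, with continuity of $T_1$ via uniform convergence of the extensions $\tilde\mu_n$ and viscosity stability, continuity of $T_2$ via the uniform $W^{1,r}(\Omega_\delta)$ bounds from Lemma~\ref{thm:mdelta} and compact embedding, and precompactness/boundedness from the uniform $L^\infty$ bound on $F$ under $(F2)$. Your observation that $T$ in fact sends the \emph{entire} space into a single bounded set (since $\|F(\cdot;\mu)\|_{L^\infty}\leq \|f\|_{L^\infty}$ independently of $\mu$) is a mild streamlining of the paper's boundedness check; one small gloss is that the $m_{n,\delta}$ convergence is strong in $C^{0,\alpha}$ (and weak in $W^{1,r}$) via compact embedding, not strong in $W^{1,r}$, but that is exactly what is needed to pass to the limit in the Neumann weak formulation.
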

\begin{proof}
We apply Schaefer's fixed point theorem as in Theorem \ref{thm:fixed}. As we will be working with an HJB equation set in $\Omega$ and a KFP equation set in $\Omega_\delta$, we will be exploiting that the method for extending measures from $\Omega_\delta$ to $\Omega$ defined in $(\ref{tilde})$ is $C^{0,\alpha}$-norm preserving.
\\\\
To show the continuity of $T$, we consider a sequence $\mu_n$ which converges to $\mu$ in $C^{0,\alpha}(\Omega_\delta)$, and we start by showing that $T_1(\mu_n)$ converges to $T_1(\mu)$ locally uniformly. To simplify notation, we let $b_n:=T_1(\mu_n)$, $b:=T_1(\mu)$, and $m_n:=T_2(b_n)$.\\\\
Let $(u_n, \rho_n)$ be the unique explosive solution of $(\ref{eq:hj})$ that corresponds to $F(x;{\mu}_n)$ and satisfies $u_n(x_0)=0$, where $x_0 \in \Omega$ is fixed. As the sequence $\mu_n$ converges to $\mu$ in $C^{0, \alpha}(\Omega_\delta)$, $\mu_n$ converges uniformly  to $\mu$ in $\Omega_\delta$, and as a result $\tilde{\mu}_n$ converges uniformly to $\tilde{\mu}$ in $\Omega$, where $\tilde{\mu}_n$ and $\tilde{\mu}$ are defined as in $(\ref{tilde})$. \\\\
Due to the stability properties of viscosity solutions, to prove continuity of $T_1$ under assumption $(F2)$, it is sufficient to show that if $x_n\to x$ in $\Omega$, then $f(\tilde{\mu}_n(x_n))\to f(\tilde{\mu}(x))$. However, this follows immediately from the continuity of $f$ and the equicontinuity of $\tilde{\mu}_n$. 
\\\\
Next we show that $T_2$ is continuous. Since $||b_n||_{L_{loc}^{\infty}(\Omega)}$ is bounded in $n$, $m_n$ is uniformly bounded in $W^{1,r}(\Omega_\delta)$ and, hence, bounded in $C^{0,\beta}(\Omega_\delta)$ for $\alpha<\beta<1-n/r$. In view of the compact embedding of  $C^{0,\beta}(\Omega_\delta)$ in $C^{0,\alpha}(\Omega_\delta)$, there is a subsequence of $m_n$ that converges in $C^{0, \alpha}(\Omega_\delta)$  to some $m\in C^{0, \alpha}(\Omega_\delta)$. Moreover, $D m_n$ converges weakly to $D m$. Consequently, we can pass to the limit in the weak formulation of $m_n$ and obtain that $m\in W^{1,r}(\Omega_\delta)$ is a positive probability measure that satisfies the KFP equation with drift $b=T_1(\mu)$. The uniqueness of $m$ yields convergence of $T(\mu_n)$ to $T(\mu)$ along the full sequence. \\\\
Next, to show that $T$ is precompact, we will use the bounds in Theorem $\ref{thm: bounds}$, Lemma $\ref{thm:mdelta}$, and the Sobelov embedding theorem. \\\\
For $\mu \in C^{0,\alpha}(\Omega_\delta)$ and $b=T_1(\mu)$, Theorem \ref{thm: bounds} yields $|b(x)|\leq C d(x)^{-1}$, where $C$ depends on $\Omega$ and $||F(x;{\mu})||_{L^{\infty}(\Omega; L^1(\Omega))}$. Note that by assumption $(F2)$, $||b||_{L^\infty(\Omega_\delta)}$ is bounded by a constant depending on $||f||_{{L^\infty}(\mathbb{R})}$. \\\\
Lemma $\ref{thm:mdelta}$ yields that $||T(\mu)||_{W^{1,r}(\Omega_\delta)}$ is bounded by a constant depending on $||f||_{L^\infty(\mathbb{R})}$.
Since $W^{1,r}(\Omega_\delta)$ embeds compactly in $C^{0,\alpha}(\Omega_\delta)$ for $\alpha <1-n/r$, choosing appropriately large $r$, $T$ sends bounded sets of $C^{0, \alpha}(\Omega_\delta)$ to precompact ones.\\\\
It remains to check that if $F$ is the set defined by
\begin{align*}
F=\{m\in C^{0,\alpha}(\Omega_\delta) | m=\lambda T(m) \text{ for some }0\leq \lambda \leq 1\},
\end{align*}
then $F$ is bounded.
This follows from Theorem \ref{thm:mdelta} as $q|D u|^{q-2}D u$ is bounded in $L^\infty(\Omega_\delta)$.

\end{proof}

\subsection{Limit of the approximate problem}\label{s:lim}
In this section we prove that in the local setting, the solutions $(u_\delta, \rho, m_\delta)$ of $(\ref{eq:deltap})$ converge locally uniformly to solutions of $(\ref{eq:mainintro})$. We begin by showing that, along subsequences, $u_\delta$ has a local uniform limit $u$ using the Arzel\`a-Ascoli Theorem. We then construct a Lyapanov function and use Lemma \ref{weakm} and Lemma \ref{thm:mpro} to show that the measures $m_\delta$ converge locally uniformly to a proper weak solution $m$ of a continuity equation. Lastly we show that $u$ is the explosive solution corresponding to $f(m(x))$. 
\begin{theorem}\label{thm:delta1}
Given $\delta>0$ and $F$ satisfying $(F2)$, let $x_0\in \Omega$, and $(u_\delta, \rho_\delta,m_\delta) \in W_{loc}^{2,r}(\Omega)\times \mathbb{R}\times W^{1,r}(\Omega_\delta)$  be a solution of $(\ref{eq:deltap})$ satisfying $u_\delta (x_0)=0$. Along subsequences, $\rho_\delta$ converges to a constant $\rho$ and $u_\delta$ converges in $C^1_{loc}(\Omega)$ to $u \in W_{loc}^{2,r}(\Omega)$.
\end{theorem}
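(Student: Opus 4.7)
The plan is to extract the convergent subsequences by combining the uniform $L^\infty$-bound on the coupling that assumption $(F2)$ automatically provides with the local a priori estimates for explosive HJB solutions given by Theorem \ref{thm: bounds}, followed by a standard Arzel\`a--Ascoli / weak compactness argument in the Sobolev space $W^{2,r}_{loc}(\Omega)$.

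More precisely, by $(F2)$ we have $\|F(\cdot;m_\delta)\|_{L^\infty(\Omega)}\le \|f\|_{L^\infty(\mathbb R)}$, a bound that is independent of $\delta$. I would first apply Theorem \ref{thm: bounds}(i) to the HJB part of (\ref{eq:deltap}) to deduce a uniform bound $|\rho_\delta|\le C_1$, and hence extract a subsequence with $\rho_\delta\to\rho\in\mathbb R$. Next, Theorem \ref{thm: bounds}(ii) and (iii) (or (iii)$'$ when $q=2$) give the uniform local bounds $|Du_\delta(x)|\le C_2 d(x)^{-1/(q-1)}$ and $|u_\delta(x)|\le C_3\, d(x)^{(q-2)/(q-1)}$, again with constants depending only on $\Omega$ and $\|f\|_{L^\infty(\mathbb R)}$. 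Integrating the gradient bound along a path from $x_0$ together with the normalization $u_\delta(x_0)=0$ gives a locally uniform bound on $\|u_\delta\|_{L^\infty(K')}$ for each compact $K'\Subset\Omega$.

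With these $L^\infty_{loc}$ bounds in hand, I would then apply the interior estimate Theorem \ref{thm: bounds}(iv) to each nested pair $K\Subset K'\Subset\Omega$ to obtain that $\{u_\delta\}_\delta$ is bounded in $W^{2,r}(K)$ for every $r>1$. Choosing $r>n$ and invoking the compact Sobolev embedding $W^{2,r}(K)\hookrightarrow C^{1,\alpha}(K)$ (with $\alpha<1-n/r$), a diagonal extraction over an exhaustion $\Omega=\bigcup_k K_k$ yields a subsequence, still denoted $u_\delta$, converging in $C^{1}_{loc}(\Omega)$ to some $u$. The weak compactness of bounded sets in $W^{2,r}(K)$, combined with the strong $C^1_{loc}$ convergence, identifies the limit $u$ as an element of $W^{2,r}_{loc}(\Omega)$ with $Du_\delta\to Du$ locally uniformly.

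The only mildly delicate point, and what I would treat as the main obstacle, is ensuring that the constants in each of the a priori bounds really are independent of $\delta$: Theorem \ref{thm: bounds} gives constants depending on $\Omega$ and on the $L^\infty$-norm of the right-hand side $F(\cdot;m_\delta)$, so the argument relies crucially on the fact that this norm is controlled by $\|f\|_{L^\infty(\mathbb R)}$ for every admissible $m_\delta$, which is exactly what $(F2)$ together with the $C^{0,\alpha}$-norm-preserving extension $\tilde{m}_\delta$ guarantees. Once this uniformity is in place, identifying $u$ as a viscosity solution of some limit equation is postponed to the next step of the paper, where the limit $m$ of $m_\delta$ is produced via the Lyapunov-function argument using Lemma \ref{thm:mpro} and Lemma \ref{weakm}.
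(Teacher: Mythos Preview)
Your proposal is correct and follows essentially the same approach as the paper: use $(F2)$ to get a $\delta$-independent $L^\infty$ bound on the coupling, feed this into the a priori estimates of Theorem~\ref{thm: bounds} to obtain uniform local $W^{2,r}$ bounds on $u_\delta$ and a uniform bound on $\rho_\delta$, then extract convergent subsequences via Arzel\`a--Ascoli. Your write-up is in fact more explicit than the paper's (you spell out the Sobolev embedding and the diagonal extraction), but the underlying argument is identical.
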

\begin{proof}
It follows from Theorem $\ref{thm: bounds}$, and the uniform in $\delta$ bounds on $||F(x;m_\delta)||_{L^\infty(\Omega; L^1(\Omega))}$, that $\rho_\delta$ and the $W^{1,r}(\Omega_\delta)$ bounds of $u_\delta$ are bounded uniformly in $\delta$. The desired convergence along subsequences follows by the Arzel\`a-Ascoli Theorem.
\end{proof}
\noindent For now we make no claims about the equation $u$ solves. We must first make sense of the limit of the measures $m_\delta$. 
\begin{theorem} Given $\delta>0$ and $F$ satisfying $(F2)$, let $(u_\delta, \rho_\delta,m_\delta) \in W_{loc}^{2,r}(\Omega)\times \mathbb{R}\times W^{1,r}(\Omega_\delta)$  be a solution of $(\ref{eq:deltap})$. Along subsequences, $m_\delta$ converges locally uniformly  to $m \in W^{1,r}_{loc}(\Omega)$ which is a proper weak solution of  
\begin{eqnarray*}
\Delta m+div(m b)=0 \text{ in }\Omega,
\end{eqnarray*}
where $b=q|D u|^{q-2} D u$ and $u$ is the limit function from Theorem $\ref{thm:delta1}$. 
\end{theorem}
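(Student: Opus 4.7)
The plan is to combine Lemma~\ref{weakm}, which produces a non-negative weak limit, with Lemma~\ref{thm:mpro}, which upgrades the limit to a proper weak solution once uniform tightness is available. By Theorem~\ref{thm:delta1}, along a subsequence $u_\delta\to u$ in $C^{1}_{loc}(\Omega)$, so the drifts $b_\delta:=q|Du_\delta|^{q-2}Du_\delta$ converge locally uniformly in $\Omega$ to $b:=q|Du|^{q-2}Du$ (the continuity of $p\mapsto q|p|^{q-2}p$ at the origin for $q>1$ is enough to handle any interior zero of $Du$). Applying Lemma~\ref{weakm} to the Neumann weak solutions $m_\delta$ on the nested subdomains $\Omega_\delta$, I extract a further subsequence so that $m_\delta\to m\in W^{1,r}_{loc}(\Omega)$ locally uniformly, with $m\geq 0$ a weak solution of $\Delta m+\mathrm{div}(mb)=0$ in the sense of Definition~\ref{def:weak}. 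Since $\int_{\Omega_\delta}m_\delta=1$, Fatou gives $\int_\Omega m\leq 1$; the reverse inequality reduces to proving uniform tightness of $(m_\delta)$.

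To obtain uniform tightness I invoke Lemma~\ref{thm:mpro} on the exhaustion $\Omega_{\delta_k}$ with the Lyapunov function
\[
V(x):=d(x)^{-C+1+\epsilon},\qquad C:=\tfrac{q}{q-1},\quad 0<\epsilon<C-1,
\]
the same choice used in the proof of Lemma~\ref{lemma:uni}. Condition~(\ref{v1}) is immediate, and $\Omega_{\delta_k}=\{V<\delta_k^{-(C-1-\epsilon)}\}$ exhausts $\Omega$. For the integrability hypothesis, Theorem~\ref{thm: bounds} together with the uniform bound $\|F(\cdot\,;m_\delta)\|_{L^\infty(\Omega)}\leq\|f\|_{L^\infty(\mathbb{R})}$ supplied by $(F2)$ gives $\sup_\delta\|b_\delta\|_{L^\infty(K)}<\infty$ for every compact $K\subset\Omega$, so any $\alpha_R>n$ is admissible.

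The heart of the argument is the uniform blowup condition~(\ref{v2}). Using the $\|g\|_{L^\infty(\Omega)}$-uniform version of Theorem~\ref{thm:asy} furnished by Lemma~\ref{lem:su} in the Appendix, combined with the fact that the tangential components of $Du_\delta$ are of strictly lower order than the normal component, one obtains
\[
\lim_{d(x)\to 0}\,\sup_\delta\,\bigl|(b_\delta(x)\cdot Dd(x))\,d(x)+C\bigr|=0.
\]
Setting $\beta:=-C+1+\epsilon$ and computing directly near $\partial\Omega$,
\[
\Delta V-DV\cdot b_\delta \;\sim\; \beta(\beta-1+C)\,d^{\beta-2} \;=\; \epsilon\bigl(\epsilon-(C-1)\bigr)\,d^{-C-1+\epsilon},
\]
uniformly in $\delta$. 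Because $0<\epsilon<C-1$, the coefficient is strictly negative and the exponent is strictly negative, so the expression tends to $-\infty$ uniformly in $\delta$. Lemma~\ref{thm:mpro} then delivers uniform tightness of $(m_\delta)$ on $\Omega$, which combined with $m_\delta\to m$ locally uniformly yields $\int_\Omega m=1$.

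The main obstacle is exactly the \emph{uniform-in-$\delta$} boundary asymptotics of $Du_\delta$: Theorem~\ref{thm:asy} as stated gives only a pointwise limit for a single fixed solution, and the Lyapunov argument would collapse if the blowup rate in~(\ref{v2}) depended on $\delta$. This is precisely why the paper invests in the stronger Lemma~\ref{lem:su}, whose modulus of convergence is controlled by $\|F(\cdot\,;m_\delta)\|_{L^\infty(\Omega)}$, and it is precisely here that assumption $(F2)$, via the a priori bound $\|f\|_{L^\infty(\mathbb{R})}$, is used in an essential way.
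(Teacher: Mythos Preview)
Your proposal is correct and follows essentially the same route as the paper: apply Lemma~\ref{weakm} to produce the weak limit, then verify the hypotheses of Lemma~\ref{thm:mpro} using the Lyapunov function $V=d^{-C+1+\epsilon}$ with $C=q/(q-1)$, invoking Lemma~\ref{lem:su} (and the $\delta$-independent bound $\|F(\cdot;m_\delta)\|_{L^\infty}\leq\|f\|_{L^\infty}$ from $(F2)$) to get the uniform-in-$\delta$ boundary asymptotics needed for~(\ref{v2}). Your explicit computation of $\Delta V-DV\cdot b_\delta$ and your closing remark on why the \emph{uniform} version of the asymptotics is indispensable add welcome detail that the paper leaves implicit.
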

\begin{proof}
In view of Lemma \ref{weakm} and Lemma \ref{thm:mpro}, it suffices to construct a function $V$ satisfying $(\ref{v1})$ and $(\ref{v2})$ for $b_\delta=q|D u_\delta|^{q-2} D u_\delta$. 
\\\\
Consider $V(x)= d(x)^{-C+1+\epsilon}$, where $C=q/(q-1)$ and $0<\epsilon< C-1$, which satisfies the desired growth condition $(\ref{v1})$ at the boundary. The $L^\infty (\Omega; L^1(\Omega))$-norm of the coupling term $F(x;{m}_\delta)$ is uniformly bounded in $\delta$ by $||f||_{L^\infty(\mathbb{R})}$. By Lemma $\ref{lem:su}$, $b_\delta(x)\cdot \nu(x) d(x)\to C$ uniformly in $\delta$ as $d(x)\to 0$. It follows that the limit $\Delta V -b_\delta \cdot D V \to -\infty$ at the boundary is likewise uniform in $\delta$, and $(\ref{v2})$ is satisfied.  The uniform tightness of the measures $m_\delta$ ensures that $\int_\Omega m=1$ and $m_\delta$ converges to $m$ in $L^1(\Omega)$. 
\end{proof}
\noindent We are now ready to state the equation that $u$ solves.

\begin{theorem}
Let $F$ satisfy $(F2)$ and $(u_\delta, \rho_\delta,m_\delta) \in W_{loc}^{2,r}(\Omega)\times \mathbb{R} \times W^{1,r}(\Omega_\delta)$  be a solution of $(\ref{eq:deltap})$. Let $u$, $m$ and $\rho$ be as constructed in Theorems 5.1 and 5.2. Then  $(u, \rho)$ is the explosive solution of 
\begin{align*}
\begin{cases}
-\Delta u+|D u|^q+\rho =F(x;m) \text{ in }\Omega,\\
\lim\limits_{d(x) \to 0} u(x)=\infty.
\end{cases}
\end{align*}
\end{theorem}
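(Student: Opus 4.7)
The plan is to pass to the limit in the HJB equation satisfied by $u_\delta$, namely $-\Delta u_\delta + |Du_\delta|^q + \rho_\delta = f(\tilde{m}_\delta(x))$, using the convergences established in Theorems 5.1 and 5.2. By Theorem 5.1, along the chosen subsequence $\rho_\delta \to \rho$ and $u_\delta \to u$ in $C^1_{loc}(\Omega)$, so in particular $|Du_\delta|^q \to |Du|^q$ locally uniformly. To handle the right-hand side, observe that for any compact $K \subset \Omega$, we have $K \subset \Omega_\delta$ for $\delta$ small enough, and on $\Omega_\delta$ the inf-convolution extension satisfies $\tilde{m}_\delta = m_\delta$. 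Since Theorem 5.2 gives $m_\delta \to m$ locally uniformly on $\Omega$, we conclude that $\tilde{m}_\delta \to m$ locally uniformly on $\Omega$, and continuity of $f$ then yields $f(\tilde{m}_\delta) \to f(m)$ locally uniformly. The classical stability of viscosity solutions under locally uniform convergence of both the solution sequence and the right-hand side implies that $u$ is a viscosity solution of $-\Delta u + |Du|^q + \rho = f(m(x))$ in $\Omega$.

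The next step is to verify the infinite boundary condition $\lim_{d(x)\to 0} u(x) = \infty$. Here I would reuse the common explosive subsolution construction from the proof of Theorem \ref{thm:hjstab}. Under $(F2)$ one has $\|F(\,\cdot\,;m_\delta)\|_{L^\infty(\Omega)} \leq \|f\|_{L^\infty(\mathbb{R})}$ uniformly in $\delta$. Consequently, for every $\epsilon>0$ the explicit function $(C_q - \epsilon)\,d(x)^{(q-2)/(q-1)} - C_\epsilon$ when $1<q<2$, or $-(1-\epsilon)\log d(x) - C_\epsilon$ when $q=2$, where $C_q=(q-1)^{(q-2)/(q-1)}(2-q)^{-1}$ and $C_\epsilon$ depends only on $\|f\|_{L^\infty(\mathbb{R})}$ and $\epsilon$, is a viscosity subsolution of every approximating HJB equation. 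Since these subsolutions blow up at $\partial\Omega$ and $u_\delta$ is an explosive solution with the common normalization $u_\delta(x_0)=0$, a comparison argument yields a lower bound on $u_\delta$ that is uniform in $\delta$ and diverges as $d(x)\to 0$. Passing to the local uniform limit preserves this lower bound, giving the required explosive boundary behavior for $u$.

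Once these two steps are in place, the interior regularity estimate of Theorem \ref{thm: bounds} $(iv)$ applied to the limiting equation upgrades $u$ to $W^{2,r}_{loc}(\Omega)$ for every $r>1$, so $(u,\rho)$ is an explosive solution in the sense of Definition \ref{def: delta}. Uniqueness of such solutions (modulo the additive constant, fixed here by $u(x_0)=0$) from Theorem \ref{thm: hj} guarantees that the limit is independent of the extracted subsequence, so the full family $(u_\delta,\rho_\delta)$ converges.

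The main obstacle I expect is the boundary condition: establishing a common explosive subsolution with constants genuinely independent of $\delta$ is what justifies passing the boundary blow-up through the limit, and it relies crucially on the uniform $L^\infty$ bound on $f(\tilde{m}_\delta)$ provided by $(F2)$. The remaining ingredients — viscosity stability on the interior and the already-proven locally uniform convergence of $m_\delta$ and $Du_\delta$ — are then routine.
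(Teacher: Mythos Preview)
Your proposal is correct and follows essentially the same route as the paper: pass to the limit in the interior using viscosity stability together with the locally uniform convergence $m_\delta\to m$ (the paper phrases this via the equicontinuity estimate $|m_\delta(x_\delta)-m(x)|\leq |m_\delta(x)-m(x)|+M|x_\delta-x|^\alpha$, while you obtain it by noting $\tilde m_\delta=m_\delta$ on compacts), and then invoke the common explosive subsolution from Theorem~\ref{thm:hjstab} for the boundary blow-up. Your additional remarks on the $W^{2,r}_{loc}$ upgrade and independence of the subsequence are correct but go slightly beyond what the paper records.
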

\begin{proof}
The proof is very similar to the stability argument in Theorem \ref{thm:fixedpoint}, and follows from the local uniform H\"older bounds on $m_\delta$. Indeed, consider a sequence $x_\delta\to x$ contained in a compact set $K$, and let $M=\sup_\delta||m_\delta||_{C^{0,\alpha}(K)}$. Then, 
\begin{align}
|m_\delta(x_\delta)-m(x)|&\leq |m_\delta(x)-m(x)|+|m_\delta(x_\delta)-m_\delta(x)| \nonumber\\
&\leq |m_\delta(x)-m(x)|+M|x_\delta-x|^{\alpha}.\label{eq:expression}
\end{align}
By the uniform convergence of $m_\delta$ to $m$ on $K$, $(\ref{eq:expression})$ goes to $0$ as $\delta \to 0$. The continuity of $f$ allows us to pass to the limit in the HJB equation, obtaining that $u$ is a viscosity solution of 
\begin{align*}
-\Delta u+|D u|^q+\rho =F(x;m) \text{ in }\Omega.
\end{align*}
The argument that $u$ blows up at the boundary is the same as in Theorem $\ref{thm:hjstab}$. 
\end{proof}

%

%
%
%

\section{Uniqueness}
We are now ready to address the uniqueness claim in Theorem $\ref{thm:mainmain}$. Throughout the discussion we take $1<q<2$ and deal with the case $q=2$ at the end of the section. 
\begin{proof}[Proof of Theorem $\ref{thm:mainmain}$]
It only remains to show that uniqueness holds when $F$ satisfies $(F3)$. Here we follow the classical Lasry-Lions uniqueness argument. Consider two solutions $(u_1,\rho_1, m_1), (u_2, \rho_2, m_2) \in W^{2,r}_{loc}(\Omega)\times \mathbb{R}\times W^{1,r}_{loc}(\Omega)$ of $(\ref{eq:mainintro})$. Without loss of generality, we may assume that $u_1(x_0)=u_2(x_0)=0$ for a fixed $x_0\in \Omega$. 
\\\\
Letting $\overline{u}:=u_1-u_2$, $\overline{\rho}:=\rho_1-\rho_2$, and $\overline{m}:=m_1-m_2$, it follows that $(\overline{u},\overline{\rho},\overline{m})$ satisfies the system
\begin{align*}
\begin{cases}
-\Delta \overline{u}+|D u_1|^q-|D u_2|^q+\overline{\rho} =F(x;m_1)-F(x;m_2)&\text{ in }\Omega,\\
\Delta \overline{m}+div(m_1 b_1-m_2 b_2)=0 &\text{ in }\Omega,
\end{cases}
\end{align*}
where $b_1:=q|D u_1|^{q-2}D u_1$ and $b_2:=q|D u_2|^{q-2}D u_2$. \\\\
Consider a non-negative and compactly supported function $\phi \in C^{\infty}(\Omega)$; the particular choice of $\phi$ will be made later. \\\\
Multiplying the first equation by $\overline{m}\phi$ and integrating by parts, we obtain
\begin{align*}
\int_{\Omega}\left[(D \overline{u})\cdot D(\overline{m}\phi)+(|D u_1|^q-|D u_1|^q)\overline{m}\phi+(\overline{\rho})\overline{m}\phi\right] =\int_\Omega (F(x;m_1)-F(x;m_2))\overline{m}\phi.
\end{align*}
Similarly, using $\overline{u}\phi$ as a test function in the KFP equation, we obtain
\begin{align*}
\int_{\Omega} \left[-D (\overline{u}\phi)\cdot (D \overline{m})-(m_1 b_1-m_2 b_2)\cdot D(\overline{u}\phi)\right]=0.
\end{align*}
\noindent Adding the equations and regrouping gives:
\begin{align}
\int_\Omega (F(x;m_1)-F(x;m_2))\overline{m}\phi&=\int_{\Omega}\phi \left[(|D u_1|^q-|D u_1|^q)\overline{m}-(m_1 b_1-m_2 b_2)\cdot D(\overline{u})\right]\nonumber\\
&+\int_{\Omega}(\overline{\rho})\overline{m}\phi \nonumber\\
&-\int_{\Omega}(D \phi)\cdot [(m_1 b_1-m_2 b_2)\overline{u}]\nonumber\\
&+\int_{\Omega} (D\phi) \cdot (\overline{m}D\overline{u}-\overline{u}D\overline{m})\nonumber\\
&=\int_{\Omega}\phi m_1(|D u_1|^q-|D u_2|^q-b_1\cdot D(u_1-u_2))\label{con1}\\
&+\int_\Omega \phi m_2(|D u_2|^q-|D u_1|^q- b_2 \cdot D (u_2-u_1))\label{con2}\\
&+\int_{\Omega}(\overline{\rho})\overline{m}\phi \nonumber\\
&-\int_{\Omega}(D \phi)\cdot [(m_1 b_1-m_2 b_2)\overline{u}+2\overline{m}D\overline{u}].\nonumber\\
&+\int_{\Omega} \overline{m}\Delta \phi \overline{u} \nonumber
\end{align}
The left-hand side is positive by the monotonicity assumption $(F3)$, and $(\ref{con1})$ and $(\ref{con2})$ are negative by the convexity of the Hamiltonian. We will show that the remaining terms on the right hand side can be made arbitrarily small for the right choice of $\phi$. \\
\\
We are now ready to define $\phi$. Let $\psi(s):[0,\infty) \to [0,1]$ be a smooth function that is $0$  in a neighborhood of $[0,1]$, $1$ in a neighborhood of $2$, and constant for $s\geq 2$. For $\delta <\min(1,\epsilon_0)/2$, let $\phi(x)=\psi(d(x)/\delta)$. There exists a constant $C_1=C_1(\Omega, \psi)>0$ such that, in $\Omega_{\delta}\backslash \Omega_{2\delta}$,
\begin{align}\label{eq:dpsi}
|D \phi|\leq \dfrac{1}{\delta}||\psi'||_{L^\infty([0,\infty))}||D d||_{L^\infty(\Omega_{\delta}\backslash \Omega_{2\delta})}\leq C_1 \dfrac{1}{\delta}.
\end{align}
 and 
 \begin{align*}
 |\Delta \phi|\leq \dfrac{1}{\delta^2}||\psi''||_{L^\infty([0,\infty))}||D d||_{L^\infty(\Omega_{\delta}\backslash \Omega_{2\delta})}^2+\dfrac{1}{\delta}||\psi'||_{L^\infty([0,\infty))}||\Delta d||_{L^\infty(\Omega_{\delta}\backslash \Omega_{2\delta})}\leq C_1 \dfrac{1}{\delta^2}.
 \end{align*}
Together with the local bounds on $|u_i|$ and $|Du_i|$ from Theorem $\ref{thm: bounds}$, it follows from $(\ref{eq:dpsi})$ that there exists a constant $C_2>0$, independent of $\delta$, for which
\begin{align}
\left|\int_{\Omega}(D \phi)\cdot [(m_1 b_1-m_2 b_2)\overline{u}+2\overline{m}D\overline{u}]\right|&\leq C_2\int_{\Omega_{\delta}\backslash \Omega_{2\delta}}\dfrac{1}{\delta}(d^{-1/(q-1)})^{q-1}(m_1+m_2)d^{(q-2)/(q-1)}\nonumber\\
&+C_2\int_{\Omega_{\delta}\backslash \Omega_{2\delta}}\dfrac{1}{\delta} d^{-1/(q-1)}|\overline{m}|\nonumber\\
&\leq 2C_2\int_{\Omega_{\delta}\backslash \Omega_{2\delta}}d^{-q/(q-1)}(m_1+m_2+|\overline{m}|)\label{b2}.
\end{align}
Similarly there exists a constant $C_3>0$, independent of $\delta$, for which 
\begin{align}
\left|\int_{\Omega}(\Delta \phi)(\overline{u})\overline{m}\right|&\leq C_3\int_{\Omega_{\delta}\backslash \Omega_{2\delta}}\dfrac{1}{\delta^2}(m_1+m_2)d^{(q-2)/(q-1)}\\
&\leq 4C_3\int_{\Omega_{\delta}\backslash \Omega_{2\delta}}d^{-q/(q-1)}(m_1+m_2)\label{b3}.
\end{align}
We recall from Lemma \ref{lemma:uni} that, for small $\epsilon$, $\int_\Omega d(x)^{-q/(q-1)-1+\epsilon}m(x)$  is finite. It follows that $(\ref{b2})$ and $(\ref{b3})$ can be made arbitrarily small for $\delta$ small enough. 
\\\\
To bound the remaining term, we use that $m_1+m_2\in L^1(\Omega)$ and $\int_{\Omega}(m_1-m_2)=0$, obtaining
\begin{align*}
\left|\int_{\Omega}(\overline{\rho})\overline{m}\phi\right|&\leq |\overline{\rho}|\int_{\Omega_{\delta}\backslash \Omega_{2\delta}}|m_1-m_2|+|\overline{\rho}|\left|\int_{\Omega_{2\delta}}m_1-m_2\right|\\
&\leq |\overline{\rho}|\left(\int_{\Omega\backslash \Omega_{2\delta}}(m_1+m_2)+\left|0-\int_{\Omega\backslash \Omega_{2\delta}}(m_1-m_2)\right|\right)\\
&\leq 2|\overline{\rho}|\int_{\Omega\backslash \Omega_{2\delta}}(m_1+m_2),
\end{align*}
which can also be made arbitrarily small for $\delta$ small enough. \\\\
It follows from the monotonicity of $F(x;m)$, that $m_1=m_2$. That $u_1=u_2$ and $\rho_1=\rho_2$ follows from the uniqueness of solutions to $(\ref{eq:hj})$.  
\end{proof}

\begin{remark}
To prove uniqueness of solutions to $(\ref{eq:mainintro})$ when $q=2$, $(\ref{b2})$ should be replaced by an estimate of the form 
\begin{align}
C \int_{\Omega_{\delta}\backslash \Omega_{2\delta}}d^{-1}(-\log(d))(m_1+m_2),
\end{align}
where  $C>0$ is a constant independent of $\delta$. As $d^{-1}(-\log(d))$ has growth slower than $d^{-2+\epsilon}$, the rest of the proof then proceeds as in the $1<q<2$ case. 
\end{remark}

\section{Appendix}
In this section we prove Lemma \ref{lem:su}, a stronger version of $(ii)$ in Theorem \ref{thm:asy}, and Lemma \ref{lemma:uni}. 
\begin{lemma}\label{lem:su}
Let $m\in W^{1,r}_{loc}(\Omega)$ and consider the unique solution $(u_m,\rho_m)\in W^{2,r}_{loc}(\Omega)\times\mathbb{R}$ of 
\begin{align}
\begin{cases}
-\Delta u_m+|D u_m|^q+\rho_m =F(x;m) \text{ in }\Omega,\\
\lim\limits_{d(x) \to 0} u_m(x)=\infty.
\end{cases}
\end{align}
Let $b_m:=q|D u_m|^{q-2}D u_m$. Then, uniformly in $||F(x ; m)||_{L^\infty(\Omega; L^1(\Omega))}$,
\begin{equation*}
\lim_{d(x)\to 0}(b_m(x)\cdot \nu(x))d(x)=\dfrac{q}{q-1}.
\end{equation*}

\end{lemma}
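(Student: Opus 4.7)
The plan is a blow-up/contradiction argument: if the statement failed, there would exist $\eta>0$, a sequence $(m_k)$ with $\sup_k \|F(\cdot;m_k)\|_{L^\infty(\Omega)}$ uniformly bounded (automatic under $(F1)$ or $(F2)$), and points $x_k\in\Omega$ with $\delta_k:=d(x_k)\to 0$, such that
\[
\bigl|b_{m_k}(x_k)\cdot\nu(x_k)\,\delta_k-q/(q-1)\bigr|\ge\eta.
\]
Let $\overline{x_k}\in\partial\Omega$ be the nearest boundary point; after rotation assume the inward unit normal at $\overline{x_k}$ is $e_n$, so $x_k=\overline{x_k}+\delta_k e_n$. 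Since $Du_m$ is invariant under additive constants, we may normalize so that Theorem~\ref{thm: bounds} applies with constants depending only on the uniform bound of $\|F(\cdot;m_k)\|_{L^\infty}$.

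I would rescale by setting, for $1<q<2$,
\[
w_k(y):=\delta_k^{(2-q)/(q-1)}\bigl(u_{m_k}(\overline{x_k}+\delta_k y)-u_{m_k}(x_k)\bigr),
\]
so that $w_k(e_n)=0$. A direct computation shows
\[
-\Delta w_k+|Dw_k|^q+\delta_k^{q/(q-1)}\rho_{m_k}=\delta_k^{q/(q-1)}F(\overline{x_k}+\delta_k y;m_k)
\]
on the dilated domain $\Omega_k:=\delta_k^{-1}(\Omega-\overline{x_k})$, which converges in the local Hausdorff sense to the half-space $\mathbb{H}:=\{y:y_n>0\}$ by the $C^2$-regularity of $\partial\Omega$. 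Using Theorem~\ref{thm: bounds}(i)--(iii) together with $d(\overline{x_k}+\delta_k y)=\delta_k y_n+O(\delta_k^2)$, one obtains on any compact $K\subset\mathbb{H}$ and large $k$,
\[
|Dw_k(y)|\le C y_n^{-1/(q-1)},\qquad |w_k(y)|\le C\bigl(y_n^{(q-2)/(q-1)}+1\bigr),
\]
together with the $W^{2,r}_{loc}(\mathbb{H})$ bounds from Theorem~\ref{thm: bounds}(iv). Since the right-hand side of the rescaled PDE is $O(\delta_k^{q/(q-1)})\to 0$, Arzelà--Ascoli yields a subsequence $w_k\to w\in W^{2,r}_{loc}(\mathbb{H})$ in $C^1_{loc}$, with $w$ solving
\[
-\Delta w+|Dw|^q=0 \text{ in } \mathbb{H},\qquad w\to+\infty \text{ as } y_n\to 0^+,\qquad w(e_n)=0,
\]
where the boundary blow-up passes via the common explosive subsolution $(C_q-\epsilon)y_n^{(q-2)/(q-1)}-C_\epsilon$ used in the proof of Theorem~\ref{thm:hjstab}.

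The decisive step is identifying $w$: the homogeneous explosive problem on $\mathbb{H}$ admits, modulo additive constants, only the translation-invariant one-dimensional Lasry--Lions profile $w(y)=C_q\,y_n^{(q-2)/(q-1)}+c$ with $C_q=(q-1)^{(q-2)/(q-1)}(2-q)^{-1}$; consequently $Dw(e_n)\cdot e_n=C_q\cdot\frac{q-2}{q-1}=-(q-1)^{-1/(q-1)}$. Uniqueness of the limit forces the full sequence $Dw_k(e_n)$ to converge. Unwinding the rescaling via $Du_{m_k}(x_k)=\delta_k^{-1/(q-1)}Dw_k(e_n)$ and $\nu(x_k)=-e_n$ gives
\[
b_{m_k}(x_k)\cdot\nu(x_k)\,\delta_k=-q\,|Dw_k(e_n)|^{q-2}Dw_k(e_n)\cdot e_n\longrightarrow q(q-1)^{-1},
\]
contradicting the choice of $x_k$. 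The case $q=2$ is handled identically with the logarithmic rescaling $w_k(y)=u_{m_k}(\overline{x_k}+\delta_k y)-u_{m_k}(x_k)$ and limit profile $w(y)=-\log y_n$. The main obstacle is the uniqueness of the half-space explosive solution; while the relevant arguments are essentially contained in \cite{pll}, an alternative is to exploit translation invariance in the $n-1$ tangential directions together with comparison against the 1D profile to force $w$ to coincide with the latter up to an additive constant.
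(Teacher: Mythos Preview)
Your proposal is correct and follows the same blow-up strategy as the paper: rescale $u_m$ near a boundary point, pass to a limiting equation on the half-space, and identify the limit via a Liouville-type uniqueness result. The only notable difference is in how the half-space limit problem is set up. You normalize by subtracting $u_{m_k}(x_k)$ and obtain an \emph{explosive} limit $-\Delta w+|Dw|^q=0$ in $\{y_n>0\}$ with $w\to+\infty$ as $y_n\to 0^+$ and $w(e_n)=0$. The paper instead exploits the already-known pointwise asymptotic $u_m/d^{(q-2)/(q-1)}\to C_q$ from Theorem~\ref{thm:asy}(i), so its rescaled sequence is \emph{bounded} and converges to a positive $v$ satisfying $-\Delta v+C_q^{q-1}|Dv|^q=0$ in $\{\zeta_1>0\}$ with the finite boundary conditions $v\to C_q$ as $\zeta_1\to 0^+$ and $v\to 0$ as $\zeta_1\to\infty$; uniqueness of this positive half-space solution is then read off directly from Porretta--V\'eron~\cite{por}, Theorem~4.1. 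Both routes recover the same explicit profile, so the difference is cosmetic; the paper's version has the advantage that the needed Liouville statement is available verbatim in the literature, whereas you correctly flag the explosive half-space uniqueness as the step requiring additional justification. For $q=2$ the paper rescales as $v_{\delta,m}(\zeta)=u_m(\delta\zeta)+\log\delta$, which matches your logarithmic variant.
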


\noindent It will be enough to show uniform convergence of $(D u_m(x)\cdot \nu(x))d(x)^{1/(q-1)}$ as $d(x)\to 0$. \\
We prove this by following the proof of Theorem 2.3 in $\cite{por}$ and tracking how the convergence depends on $||F(x ;m)||_{L^\infty(\Omega; L^1(\Omega))}$. 
\\\\
\noindent In the proof that follows, we argue first when $1<q<2$, and discuss the $q=2$ case at the end of the proof. The same arguments also hold for Hamiltonians satisfying $(\ref{eq:gen})$ locally uniformly in $p$ as $\delta \to 0$. 

\begin{proof}[Proof of Lemma \ref{lem:su}] Since $\Omega$ has a uniform $C^2$ boundary, by Theorem \ref{thm:asy} there exists a $\delta_0>0$, depending on $||F(x ;m)||_{L^\infty(\Omega; L^1(\Omega))}$, such that for every $x\in \partial \Omega$, the ball $B_{\delta_0}(x-\delta_0 \nu(x))$ is contained in $\Omega$, and, for $d(x)<\delta_0$ ,
\begin{align}\label{d_0}
\left|\dfrac{u_m(x)}{d(x)^{(q-2)/(q-1)}}-C_q\right|\leq \dfrac{C_q}{2},
\end{align}
where $C_q=(q-1)^{(q-2)/(q-1)} (2-q)^{-1}$. \\\\
We begin by fixing $x_0\in \partial\Omega$ and a system of coordinates $(\eta_1, \ldots, \eta_n)$ centered at $x_0$ whose $\eta_1$-axis is the inner normal direction. Although we have fixed an $x_0$, any bounds and rates that follow will not depend on $x_0\in \partial\Omega$.\\\\
Let $O_\delta=(\delta,\ldots, 0)$ and consider the domain 
\begin{align*}
D_\delta=B(O_\delta, \delta^{1-\sigma})\cap B((\delta_0, 0, \ldots, 0), \delta_0-\delta)\text{ , }\sigma \in (0, 1/2).
\end{align*} 
Applying the change of variable $\zeta=(\eta-O_\delta)/\delta$, the domain $\tilde{D}_\delta:=\{(\eta-O_\delta)/\delta: \eta \in D_\delta\}$ converges to the half space $\{\zeta: \zeta_1>0\}$, where $\zeta_1$ is the component of the vector in the $-\nu(x)$ direction. Under this change of variable we have the bounds  
\begin{align}\label{eta}
(\zeta_1+1)\delta\leq d(\eta)\leq (\zeta_1+1)\delta+O(\delta^{2-2\sigma}).
\end{align}

\noindent It is now possible to introduce, for $\delta<\delta_0$, the blow up function 
\begin{align*}
v_{\delta,m}(\zeta)=\dfrac{u_m(\delta \zeta+O_\delta)}{\delta^{(q-2)/(q-1)}},
\end{align*}
with domain $\tilde{D}_\delta$, which, in view of $(\ref{d_0})$, is uniformly bounded in $\delta, ||F(x ;m)||_{L^\infty(\Omega; L^1(\Omega))},$ and $\zeta$. 
\\\\
Straightforward computations show that $v_{\delta,m}$ describes the blow-up of $u_m$ near the boundary. Indeed, for $\eta_1=\delta \zeta_1+O_\delta$,
\begin{align*}D v_{\delta, m}(\zeta_1)=\dfrac{D u_m(\delta \zeta_1 +O_\delta)}{C_q \delta^{-1/(q-1)}}=\dfrac{D u_m(\eta_1)}{C_q d(\eta_1)^{-1/(q-1)}}(1+\zeta_1)^{-1/(q-1)}.
\end{align*}

\noindent In $\tilde{D}_\delta$, $v_{\delta,m}$ solves, in the viscosity sense,
\begin{align}\label{vd}
-\Delta v_{\delta, m}+\dfrac{\rho_m \delta^{q/(q-1)}}{C_q}+C_q^{q-1}|D v_{\delta,m}|^q=\dfrac{F(\delta \zeta+O_\delta; m)\delta^{q/(q-1)}}{C_q}.  
\end{align}

\noindent Moreover, $D v_{\delta,m}(\zeta)$ satisfies the estimate
\begin{align*}
|D v_{\delta,m}(\zeta)|=\dfrac{|D u_m(\delta \zeta +O_\delta)|}{C_q \delta^{-1/(q-1)}}\leq C_m d(\eta)^{-1/(q-1)}\delta^{1/(q-1)}\leq C_m,
\end{align*}
where $C_m= C_m(||F(x ;m)||_{L^\infty(\Omega; L^1(\Omega))})$ comes from the locally uniform bound on $Du$ from Theorem \ref{thm: bounds}. 
\\\\
\noindent Consider a family of $\{v_{\delta, m}\}$ with $\delta\to 0$ and measures $m$ such that $||F(x ;m)||_{L^\infty(\Omega; L^1(\Omega))}$ is uniformly bounded. This is the case, for example, if $(F1)$ or $(F2)$ are satisfied and the measures $m$ are uniformly bounded in $L^1(\Omega)$. It follows from elliptic regularity that $\{v_{\delta, m}\}$ has a convergent subsequence converging in $C^{1}_{loc}$ to $v$ defined in the half-plane $\{\zeta: \zeta_1>0\}$. \\\\
Since $\dfrac{\rho_{m} \delta^{q/(q-1)}}{C_q}\to 0$ and $\dfrac{F(\delta \zeta+O_\delta; m)\delta^{q/(q-1)}}{C_q}\to 0$ uniformly, $v$ is a viscosity solution of 
\begin{align}\label{v}
-\Delta v+C_q^{q-1}|D v|^q=0 \text{ in }\{\zeta: \zeta_1>0\}.
\end{align}
\noindent In addition, $v$ satisfies the boundary conditions
\begin{align*}
\lim_{\zeta_1\to 0^{+}}v(\zeta)=C_q \text{ and } \lim_{\zeta_1\to \infty}v(\zeta)=0. 
\end{align*}
The boundary conditions follow from 
\begin{align}\label{bounds}
v_{\delta,m}(\zeta)=\dfrac{u_m(\eta)}{d(\eta)^{(q-2)/(q-1)}}\dfrac{d(\eta)^{(q-2)/(q-1)}}{\delta^{(q-2)/(q-1)}},
\end{align}
and the estimates in $(\ref{eta})$.  As shown in Theorem 4.1 of \cite{por}, the equation $(\ref{v})$ paired with boundary conditions $(\ref{bounds})$ has a unique positive solution in the half-plane, $v(\zeta)=C_q(1+\zeta_1)^{(q-2)/(q-1)}$. From $(\ref{d_0})$, $v$ is indeed positive. Thus $D v_{\delta,m}$ converges locally uniformly to $C_q(1+\zeta_1)^{-1/(q-1)}(2-q)/(q-1)\nu(x_0)$, and $(Du_m(x)\cdot \nu(x))d(x)^{1/(q-1)}$ converges uniformly in $||F(x; m)||_{L^\infty(\Omega; L^1(\Omega))}$, as $d(x)\to 0$. 
\\\\
When $q=2$, rescaling $u_m$ as before does not yield an equation of the type $(\ref{vd})$. To circumvent this, we consider a function of the form $u_m(\eta)+\log(d(\eta))$, which is uniformly bounded in $||F(x;m)||_{L^\infty(\Omega; L^1(\Omega))}$ and has a limiting equation of the desired form. 
\\\\
\noindent For the case $q=2$, consider a coordinate system centered at $x_0\in \partial\Omega$, denoted by $O$, with the $\eta_1$-axis as the inner normal direction. By the inner sphere property, there is $\delta_0>0$ such that for every $x\in \partial \Omega$, the ball $B_{\delta_0}(x-\delta_0 \nu(x))$ is contained in $\Omega$. Under this new coordinate system define
\begin{align*}
D_\delta=B(O, \delta^{1-\sigma})\cap B((\delta_0, 0, \ldots, 0), \delta_0)\text{ , }\sigma \in (0, 1/2). 
\end{align*} 
Note that we changed the definition of $D_\delta$ here.  \\\\
Restricting $u_m$ to $D_\delta$, the function
\begin{align*}
v_{\delta,m}(\zeta)=u_m(\delta \zeta)+\log(\delta),
\end{align*}
is defined on the domain $\tilde{D}_\delta:=\{\eta/\delta: \eta \in D_\delta\}$. As before, $\tilde{D}_\delta$ converges to the half space $\{\zeta: \zeta_1>0\}$, where $\zeta_1$ is the component of the vector in the $-\nu(x)$ direction. 
Under this change of variable we have the bounds  
\begin{align*}
\zeta_1\delta\leq d(\eta)\leq \zeta_1\delta+O(\delta^{2-2\sigma}).
\end{align*}

\noindent In $\tilde{D}_\delta$, $v_{\delta,m}$ satisfies, in the viscosity sense,
\begin{align*}
-\Delta v_{\delta, m}+\rho_m \delta^2+|D v_{\delta,m}|^2=F(\delta \zeta; m)\delta^2.  
\end{align*} 
By Theorem II.3 of \cite{pll},  $u_m(\delta \zeta)+\log(d(\eta))$ is bounded by a constant $C$ which is uniform in $||F(x;m)||_{L^\infty(\Omega; L^1(\Omega))}$. From the bounds $(\ref{eta})$ on $d(\eta)$ it follows that $v_{\delta,m}$ is bounded locally uniformly. From standard elliptic theory, $v_\delta$ is relatively compact in $C^1_{loc}$. Thus, along subsequences $v_{\delta,m}$ converges in $C^1_{loc}$ to a function $v$, defined in the upper-half plane, which is a viscosity solution of
\begin{align*}
-\Delta v+|D v|^2=0.
\end{align*}
Moreover, for constant $C$ as defined above, 
\begin{align*}
v_{\delta, m}&= u_m(\delta \zeta)+\log(d(\eta))+\log\left(\dfrac{\delta}{d(\eta)}\right)\\
&\geq -C+\log\left(\dfrac{1}{\zeta_1+O(\delta^{1-2\sigma})}\right).
\end{align*} 
Thus we have the boundary condition 
\begin{align*}
\lim\limits_{\zeta_1\to 0^+} v(\zeta)=\infty.
\end{align*}
Solutions of the system 
\begin{align*}
\begin{cases}
-\Delta v+|D v|^2=0 \text{ in } \{\zeta \in \mathbb{R}^n\, |\, \zeta_1>0\},\\
\lim\limits_{\zeta_1\to 0^+} v(\zeta)=\infty,
\end{cases}
\end{align*}
are of the form $-\log \zeta_1+ \kappa $ for a constant $\kappa$, which follows from the change of variable $w=e^{-v}$. Thus $D v_{\delta,m}$ converges locally uniformly to $(1/\zeta_1)\nu(x_0)$, and $(Du_m(x)\cdot \nu(x))d(x)$ converges uniformly in $||F(x; m)||_{L^\infty(\Omega; L^1(\Omega))}$, as $d(x)\to 0$.

\end{proof}

We now prove Lemma \ref{lemma:uni} from Section 3.1.

{\newtheorem*{lemma*}{Lemma \ref{lemma:uni}}}
\begin{lemma*}
Let $b: \Omega\to \mathbb{R}^d$ satisfy $(\ref{eq:asym})$ with $C>1$.  Then there exists a unique proper weak solution $m\in W^{1,r}_{loc}(\Omega)$ of   
\begin{align}\label{eq:mapp}
\Delta m+div(m b)=0 \text{ in } \Omega,
\end{align}
and, for every $\epsilon \in (0,C-1)$, there exists a $\delta =\delta(C, \epsilon)$, such that
\begin{align}
\int_{\Omega} d^{-C-1+\epsilon}m\leq \hat{C}=\hat{C}(C, ||b||_{L^\infty(\Omega_\delta)}).\label{eq:wg}
\end{align}

\end{lemma*}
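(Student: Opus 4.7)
The plan is to construct the Lyapunov function $V(x) = d(x)^{-C+1+\epsilon}$ (already flagged in the preceding text) and use it both to produce the proper weak solution via Lemma \ref{thm:mpro}, and to extract the quantitative decay bound by using $V$ itself as a test function in an approximation scheme. Uniqueness will be cited from \cite{ua}.

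First I would carry out the pointwise computation. Set $\alpha := -C+1+\epsilon < 0$. On the collar $\Omega\setminus\Omega_{\epsilon_0}$, where $d \in C^2$ and $|Dd| = 1$, one has $DV = \alpha d^{\alpha-1}Dd$ and $\Delta V = \alpha(\alpha-1)d^{\alpha-2} + \alpha d^{\alpha-1}\Delta d$. Using $(b\cdot Dd)d \to -C$ from $(\ref{eq:asym})$,
\begin{align*}
\Delta V - DV\cdot b = \alpha(\alpha-1)d^{\alpha-2} + \alpha d^{\alpha-1}\Delta d - \alpha d^{\alpha-2}(d\,Dd\cdot b) = \alpha(\alpha-1+C)d^{\alpha-2} + o(d^{\alpha-2}) = \alpha\epsilon\, d^{-C-1+\epsilon} + o(d^{-C-1+\epsilon}).
\end{align*}
Because $\alpha\epsilon < 0$, there exist $\delta = \delta(C,\epsilon) > 0$ and $c_0 = c_0(C,\epsilon) > 0$ such that on $\{d < \delta\}$,
\begin{equation*}
\Delta V - DV\cdot b \leq -c_0\, d^{-C-1+\epsilon}.
\end{equation*}
This shows $V$ satisfies both (v1) and (vb2).

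Next I would invoke Lemma \ref{thm:mpro} with $U_k = \{V < c_k\}$ and $b_k = b\chi_{U_k}$ (choosing $c_k\to\infty$ by Sard so that $\{V = c_k\}$ has measure zero). This yields a uniformly tight sequence $m_k$ of Neumann weak solutions on $U_k$. Lemma \ref{weakm} gives a subsequential $W^{1,r}_{loc}$ limit $m$ that is a weak solution of $\Delta m + \mathrm{div}(mb) = 0$, and tightness promotes this to $L^1$-convergence, ensuring $\int_\Omega m = 1$ so that $m$ is a proper weak solution. Uniqueness within this class is the content of the result in \cite{ua}.

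For the integral bound I would use $V$ restricted to $U_k$ as a test function. Since $\bar U_k \subset \Omega_{c_k^{1/\alpha}}$, $V|_{\bar U_k}\in C^2(\bar U_k)$ is admissible in $W^{1,2}(U_k)$. Integration by parts in the weak formulation gives
\begin{equation*}
\int_{\partial U_k} m_k\,DV\cdot\nu\,dS = \int_{U_k}(\Delta V - b\cdot DV)\,m_k\,dx.
\end{equation*}
Because $V < c_k$ inside $U_k$ and equals $c_k$ on $\partial U_k$, $DV\cdot\nu \geq 0$ on $\partial U_k$ and the left side is non-negative. Split the right side across $\{d<\delta\}$ and $\{d\geq\delta\}$. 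On the outer piece, $|\Delta V - b\cdot DV|$ is bounded by some $A = A(C,\epsilon,\|b\|_{L^\infty(\Omega_\delta)})$, while on the inner piece the bound from Step 1 applies. Hence
\begin{equation*}
0 \;\leq\; A\!\!\int_{U_k\cap\{d\geq\delta\}}\!\!m_k \;-\; c_0\!\!\int_{U_k\cap\{d<\delta\}}\!\! d^{-C-1+\epsilon} m_k \;\leq\; A - c_0\!\!\int_{U_k\cap\{d<\delta\}}\!\! d^{-C-1+\epsilon} m_k,
\end{equation*}
yielding the uniform estimate $\int_{\{d<\delta\}}d^{-C-1+\epsilon}m_k \leq A/c_0$. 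Combined with the trivial bound $\int_{\{d\geq\delta\}}d^{-C-1+\epsilon}m_k \leq \delta^{-C-1+\epsilon}$, passing to the limit by Fatou's lemma completes $(\ref{eq:wg})$ with $\hat C = A/c_0 + \delta^{-C-1+\epsilon}$.

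The main obstacle is the test-function/IBP step: one must verify that level sets of $V$ are regular enough to make $U_k$ a valid domain for the Neumann weak formulation, and that the boundary contribution $\int_{\partial U_k} m_k DV\cdot\nu$ genuinely has a sign. The positivity of $m_k$ and the outward monotonicity of $V$ on $\partial U_k$ handle the sign, while Sard's theorem provides regular values $c_k$; the rest is then the routine bookkeeping sketched above.
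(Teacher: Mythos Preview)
Your proof is correct, but the route you take for the integral bound differs from the paper's. Both arguments hinge on the same Lyapunov function $V=d^{-C+1+\epsilon}$ and the same pointwise computation showing $\Delta V - b\cdot DV \lesssim -d^{-C-1+\epsilon}$ near $\partial\Omega$. The divergence is in how the sign is extracted. You test the \emph{approximating} Neumann problems on $U_k=\{V<c_k\}$ with $V$ itself, and the non-negativity of $\int_{\partial U_k} m_k\,DV\cdot\nu$ (from $m_k>0$ and $DV$ pointing outward on the level set) does the work; you then pass to the limit by Fatou. The paper instead works \emph{directly with the limiting solution} $m$: it composes $V$ with a smooth, nondecreasing, concave $\phi$ chosen so that $\phi(V)$ is constant near $\partial\Omega$ (hence admissible as a test function for $m$ in the sense of Definition~\ref{def:weak}), and the sign comes from $\phi''\leq 0$ rather than a boundary term. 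This yields $\int_{\{V\leq s\}}(\Delta V - b\cdot DV)m\geq 0$ for every $s$, which unwinds to the bound without ever touching $m_k$, level-set regularity, or Fatou.

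What each approach buys: your argument is conceptually transparent (the boundary sign is geometrically obvious) but carries the overhead you flagged---Sard to get regular level sets, and a limit passage. The paper's concave-cutoff trick sidesteps both issues and gives the bound for $m$ in one stroke; it is the standard device from Bogachev--R\"ockner \cite{rus1} and is worth knowing.
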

\begin{proof}
The existence of a proper weak solution $m$ was proven in Section \ref{s:fp}. To prove (\ref{eq:wg}), we mimick the proof of Lemma 1.1 in \cite{rus1}. 
\\\\
Let $V(x)= d(x)^{-C+1+\epsilon}$ for $0<\epsilon<C-1$. In the neighborhood $\Omega\backslash\Omega_{\epsilon_0}$ of $\partial \Omega$,
\begin{align*}
\Delta V -b \cdot D V &= (-C+1+\epsilon)d^{-C-1+\epsilon}[(-C+\epsilon) |D d|^2+d\Delta d-(b\cdot D d)d]\\
&=(-C+1+\epsilon)d^{-C-1+\epsilon}[-C+\epsilon+d\Delta d-(b\cdot D d)d].
\end{align*}
Since the last factor approaches $\epsilon$ as $d\to 0$, in a neighborhood of $\partial \Omega$, we have
\begin{align}\label{eq:Vass}
 \Delta V -b \cdot D V\lesssim -d^{-C-1+\epsilon}.
\end{align} 
Let $\delta_0>0$ be such that, for $x\in \Omega/\Omega_{\delta_0}$,
\begin{align*}
\Delta V -b \cdot D V \leq -1,
\end{align*} 
and define $S:=\int_{\Omega_{\delta_0}}|\Delta V -b \cdot D V|m$.
\\\\
Note that since $m$ is a probability measure on $\Omega$, $S$ is bounded by the $L^\infty$ norm of $|\Delta V -b \cdot D V|$ on $\Omega_{\delta_0}$.\\\\
We claim that, for all $\delta<\delta_0$,
\begin{equation}\label{eq:V}
\int_{\Omega_{\delta}}|\Delta V -b \cdot D V|m\leq 2S.
\end{equation}
Since $S$ is independent of $\delta$, it would then follow that 
\begin{align*}
\int_{\Omega}|\Delta V -b \cdot D V|m\leq 2S,
\end{align*}
and the growth of $\Delta V -b \cdot D V$ in $(\ref{eq:Vass})$ completes the proof of the lemma. \\\\
To prove $(\ref{eq:V})$, fix $\delta$, $s$, and $r$, such that $0<\delta_0^{-C+1+\epsilon}<s<\delta^{-C+1+\epsilon}<r$. Consider a nondecreasing cut-off function $\phi \in C^2(\mathbb{R})$ such that $\phi''\leq 0$, $\phi(t)=t$ for $0\leq t\leq s$, and $\phi(t)=r$ for $t\geq r$. Since $\phi(V(x))$ is constant in a neighborhood of the boundary $\partial \Omega$, it follows that it is a permissible test function in $(\ref{eq:mapp})$.  
The sign of $\phi''$ yields, 
\begin{align*}
0&=\int_{\Omega}[\Delta (\phi(V)) -b \cdot D (\phi(V)]\\
&=\int_{\Omega}[\phi''(V)|D V|^2+\phi'(V)\Delta V -\phi'(V)b \cdot D V]m\\
&\leq \int_{\Omega} \phi'(V)[\Delta V -b \cdot D V]m.
\end{align*}
The last integrand is zero outside $\{V\leq r\}$, equals $(\Delta V -b \cdot D V)m$ on $\{V\leq s\}$ and, is non-positive on $\{s\leq V\leq r\}$. Consequently,  
\begin{align*}
\int_{\{V\leq s\}}(\Delta V -b \cdot D V)m\geq 0.
\end{align*}
Letting $s\to \delta^{-C+1+\epsilon}$ yields 
\begin{align*}
\int_{\Omega_\delta}(\Delta V -b \cdot D V)m\geq 0.
\end{align*}
 Since $\Delta V -b \cdot D V\leq 0$ on $\Omega_\delta\backslash\Omega_{\delta_0}$, and  $\int_{\Omega_{\delta_0}}|\Delta V -b \cdot D V|m=S$, it follows that 
 \begin{align*}
 \int_{\Omega_\delta}|\Delta V -b \cdot D V|m\leq 2S.
 \end{align*}

\end{proof}


\bibliographystyle{abbrv}
\bibliography{biblio}
\nocite{rus}
\nocite{gom}
\end{document}